\newtheorem{lem}{Lemma}
\newtheorem{prop}{Proposition}
\newcommand{\beq}{\begin{equation}}
\newcommand{\si}{s_{in}}
\renewcommand{\r}{\mathbb{R}_+^2}
\renewcommand{\o}{\overline}
\renewcommand{\u}{\underline}
\newcommand{\eeq}{\end{equation}}
\newcommand{\com}[1]{{ {#1}}}
\begin{document}
\title{Hybrid Control of a Bioreactor with Quantized Measurements: Extended Version}
%
%
%

\author{Francis Mairet  and Jean-Luc Gouz\'e
\thanks{F. Mairet  and J.-L. Gouz\'e are with Inria Biocore, 2004 route des Lucioles, BP 93, 06902 Sophia-Antipolis Cedex, FRANCE. E-mail: francis.mairet@inria.fr, jean-luc.gouze@inria.fr.  \com{A preliminary version of this work appeared in the proceedings of the 1st International Conference on Formal Methods in Macro-Biology, 22-24 Sept. 2014, Noum\'ea (New Caledonia). This work was supported by Phycover (ANR-14-CE04-0011), PEPS BMI-Pectolyse, and Investissement d'avenir Reset projects.}}}

%

\maketitle

\begin{abstract}
We consider the problem of global stabilization of an unstable bioreactor model (e.g. for anaerobic digestion), when the measurements are discrete and in finite number (``quantized''), with  control of the dilution rate. The model is a differential system with two variables, and the output is the biomass growth. The measurements define regions in the state space, and they can be perfect or uncertain (i.e. without or with overlaps). We show that, under appropriate assumptions, a quantized control may lead to global stabilization: trajectories have to follow some transitions between the regions, until the final region where they converge toward the reference equilibrium. On the boundary between regions, the solutions are defined as a Filippov differential inclusion. If the assumptions are not fulfilled, sliding modes may appear, and the transition graphs are not deterministic.
\end{abstract}

\begin{IEEEkeywords}
Hybrid systems, bioreactor, differential inclusions, quantized output, process control
\end{IEEEkeywords}

%
\IEEEpeerreviewmaketitle


%


\section{Introduction}

Classical control methods are often based on the complete knowledge of some outputs $y (t)$ of the system \cite{sontag1998mathematical}. By complete, we mean that any output $y_i$ is a real number, possibly measured with some noise  $\delta_i$. The control is then built with this (noisy) measurement. These tools have been successfully applied in many domains of science and engineering, e.g. in the domains of biosystems and bioreactors \cite{dochain2010automatic}. However, in these domains, detailed quantitative measurements are often difficult, too expensive, or even impossible. A striking example is the measurements of gene expression by DNA-chips, giving only a Boolean measure equal to on (gene is expressed) or off (not expressed). In the domain of bioprocesses, it frequently happens that only a limited number or level of measurements are available (e.g. low, high, very high ...) because the devices only give a discretized semi-quantitative or qualitative measurement \cite{bernard1999non}. The measure may also be 
quantized by some physical 
device (as the time given by an analogical clock), and give as a result some number among a finite collection.

For this case of quantized outputs, the problem of control has to be considered in a non-classical way: the control cannot be a function of the full continuous state variables anymore, and most likely will change only when the quantized measurement changes. Moreover, the control itself could be quantized, due to physical device limitations.

The above framework has been considered by numerous works, having their own specificity: quantized output and control with adjustable ``zoom'' and different time protocols, \cite{nesic2009unified}, hybrid systems abstracting continuous ones (cf. \cite{lunze2009handbook} for many examples of theories and applications).

 In this paper, we consider a classical problem in the field of bioprocesses: the stabilization of an unstable bioreactor model, representing for example anaerobic digestion,   towards a working set point. Anaerobic digestion is one of the most employed process for (liquid) waste treatment \cite{dochain2010automatic}. Considering a simplified model with two state variables (substrate and biomass), the system has two stable equilibria (and an unstable one, with a separatrix between the two basins of attractions of the respective stable equilibria), one being the (undesirable) washout of the culture (\cite{hess2008design}). The goal is to globally stabilize the process toward the other locally stable reference equilibrium. The (classical) output is the biomass growth (through gaseous production), the control is the dilution  rate (see \cite{steyer2006lessons} for a review of control strategies). \com{There exists many approaches based on well-accepted models (\cite{AM2}), for scalar continuous output \cite{mailleret2004nonlinear,antonelli2003set} or even for delayed and piecewise constant measurements \cite{mazenc2013}}. Some original approaches make use of a supplementary competitor biomass \cite{rapaport2008biological}.

In this paper, we suppose that the outputs are discrete or quantized: there are available in the form of finite discrete measurements. The precise measurements models are described later: roughly, the simplest one is ``perfect'', without noise, meaning that the true value is supposed to be in one of the discrete measurements, and that the transitions between two contiguous discrete measures are perfectly known. The next model is an uncertain model where the discrete measurements may overlap, and the true value is at the intersection between two quantized outputs.  Remark that the model of uncertainty is different from the interval observers approaches (\cite{alcaraz2002software,gouze2000interval})  for the estimation  or regulation \cite{alcaraz2005robust}, where some outputs or kinetics are not well known, but upper or lower bounds are known. Moreover, in the interval observer case, the variables are classical continuous variables.

For this  problem, the general approaches described above do not apply, and we have to turn to more tailored methods, often coming from the theory of hybrid systems, or quantized feedbacks (see above) ...  We here develop our adapted ``hybrid'' approach. It has also some relations with the fuzzy modeling and control approach: see e.g. in a similar bioreactor process the paper \cite{estaben1997fuzzy}. We provide here a more analytic approach, and prove our results of stability with techniques coming from differential inclusions and hybrid systems theory \cite{Filippov}. Our work has some relations with theoretical qualitative control techniques used for piecewise linear systems in the field of genetic regulatory networks (\cite{chaves2011exact}). The approach is also similar to the domain approaches used in hybrid systems theory, where there are some (controlled) transitions between regions, forming a transition graph \cite{belta2006controlling,habets2004control}.

The paper is organized as follows: Section \ref{sec-model} describes the bioreactor model and the measurements models. The next section is devoted to the model analysis in open-loop (with a constant dilution rate). In Section \ref{sec-control}, we propose a control law and show its global stability through the analysis of the transitions between regions (with the help of the Filippov definition of solutions of differential inclusion). Finally, in Section \ref{sec-choose1}, we explain how to choose the dilution rates by a graphical approach, and we end by giving  some simulations, with or without uncertainty.

\section{Framework}\label{sec-model}
\subsection{Model presentation}
In a perfectly mixed continuous reactor, the growth of biomass $x$ limited by a substrate $s$ can be described by the following system (see \cite{bastin1990line,dochain2010automatic}): 
\begin{equation}\label{eq-sys}
\left\lbrace\begin{array}{l}
\dot s= u(t) (\si -s) - k \mu(s)x \\
\dot x = (\mu(s)-u(t)) x
\end{array}\right.
\end{equation}
where $\si$ is the input substrate concentration, $u(t)$ the dilution rate, $k$ the pseudo yield coefficient, and $\mu(s)$ the specific growth rate. 

Given $\xi=(s,x)$, let us rewrite System \eqref{eq-sys} as $\dot \xi=f(\xi,u(t))$, where the dilution rate $u(t)$ is the manipulated input.\\

The specific growth rate $\mu(s)$ is assumed to be a Haldane function (i.e. with substrate inhibition) \cite{AM2}:
\begin{equation}
\label{mus}
\mu(s)=\bar \mu \frac{s}{k_S + s + s^2/k_I}
\end{equation}
where $\bar \mu, k_S, k_I$  are positive parameters. This function admits a maximum for a substrate concentration $s=\sqrt{k_S k_I}:=\bar s$, and we will assume $\bar s<\si$.

 \begin{lem}\label{lem-bound}
 The solutions of System \eqref{eq-sys} with initial conditions in the positive orthant are positive and bounded.
 \end{lem}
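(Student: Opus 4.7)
The plan is to handle positivity and boundedness separately, exploiting the mass-balance structure of the model. Throughout I assume the control $u(t)$ is non-negative and (locally) bounded, as is standard for a dilution rate.

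For positivity, I would argue componentwise. The $x$-equation is linear in $x$, so integration gives $x(t)=x(0)\exp\!\bigl(\int_0^t (\mu(s(\tau))-u(\tau))\,d\tau\bigr)$, which keeps $x(t)>0$ whenever $x(0)>0$. For $s$, I would look at the vector field on the boundary $\{s=0\}$: since $\mu(0)=0$ by the Haldane expression \eqref{mus}, we get $\dot s|_{s=0}=u(t)\,s_{in}\ge 0$, so trajectories cannot leave the half-plane $\{s\ge 0\}$ through this face. Combined with positivity of $x$, the positive orthant is forward invariant.

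For boundedness, the key trick is the classical mass-balance change of variable $z:=s+kx$. A direct computation cancels the $k\mu(s)x$ terms and yields the scalar linear ODE
\begin{equation*}
\dot z = u(t)\,(s_{in}-z),
\end{equation*}
which, together with $u(t)\ge 0$, immediately gives $z(t)\le \max\{z(0),s_{in}\}$ for all $t\ge 0$. Since both $s$ and $x$ are non-negative, this upper bound on $s+kx$ translates into uniform bounds $s(t)\le\max\{s(0)+kx(0),s_{in}\}$ and $x(t)\le \max\{s(0)+kx(0),s_{in}\}/k$.

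I do not foresee a genuinely hard step here; the only thing to be mindful of is the regularity of $u(t)$ (we rely on existence and uniqueness of solutions on the positive orthant and on the sign $u\ge 0$ to bound $z$ from above), and the fact that $\mu(0)=0$, which is what makes the $\{s=0\}$ face non-attracting. Both are satisfied under the modelling assumptions stated before Lemma~\ref{lem-bound}.
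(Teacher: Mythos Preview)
Your proof is correct and follows essentially the same mass-balance approach as the paper: introduce $z=s+kx$, derive $\dot z=u(t)(s_{in}-z)$, and deduce $z(t)\le\max(z(0),s_{in})$. The only minor difference is that the paper bounds $s$ directly by observing $\dot s<0$ whenever $s>s_{in}$, which yields the sharper estimate $s(t)\le\max(s(0),s_{in})$; this tighter bound (rather than your $s(t)\le\max(z(0),s_{in})$) is the one later reused to define the compact domains $\Omega_1,\Omega_2$ in Lemmas~\ref{lem-1region} and~\ref{lem-transition}.
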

\begin{proof}
 It is easy to check that the solutions stay positive.
Now consider $z=s+kx$ whose derivative writes
$$
\dot z=u(t)(\si-z).$$
It follows that $z$ is upper bounded by $\max(z(0),\si)$, and so is $kx$. Finally, if $s(t)> \si$, then $\dot s(t)<0$, therefore $s$ is upper bounded by  $\max(s(0),\si)$.
\end{proof}

In the following, we will assume initial conditions within the interior of the positive orthant.

\subsection{Quantized measurements} \label{measu}

We consider that a proxy of biomass growth $y(\xi)=\alpha \mu(s)x$  is monitored (e.g. through gas production), but in a quantized way, in the form of a more or less qualitative measure:  it can be levels (high, medium, low...) or discrete measures. 
Finally, we only know that  $y(\xi)$ is in a given range, or equivalently that  $\xi$ is in a given region (parameter $\alpha$ is a positive yield coefficient):
\begin{align*}
Y_i=&\{\xi \in \r:\u y_i \leq y(\xi) \leq \o y_i \}, \ i=1,\ldots,n-1,\\
Y_n=&\{\xi \in \r:\u y_n \leq y(\xi) \}.
\end{align*}

where $0=\u y_1<\u y_2<\ldots<\u y_n$ and  $\o y_1<\o y_2<\ldots<\o y_{n-1}$. We will consider two cases:
\begin{itemize}
\item (A1). \textit{Perfect} quantized measurements: 
$$
\o y_i= \u y_{i+1}, \ \forall i\in\{1,\ldots,n-1\}.
$$
This corresponds to the case where there is no overlap between regions. The boundaries are perfectly defined and measured.\\
\psscalebox{1.0 1.0} 

{\begin{center}
\begin{pspicture}(0,-0.8)(8.8,1)
\psline[linecolor=black, linewidth=0.02, arrowsize=0.1cm 2.0,arrowlength=1.9,arrowinset=0.0]{->}(0.0,0)(7.8736844,0)(8.8,0)
\rput[bl](.3,-.7){0}
\rput[bl](2.8,-.8){$\underline y_i=\o y_{i-1}$}
\rput[bl](5.3,-.8){$\o y_i=\u y_{i+1}$}
\rput[bl](1.4,0.3){$\overbrace{\qquad\qquad\quad  }^{Y_{i-1}}$}
\rput[bl](3.2,0.3){$\overbrace{\qquad\qquad\qquad\quad}^{Y_i}$}
\rput[bl](5.7,0.3){$\overbrace{\qquad\qquad\qquad}^{Y_{i+1}}$}
\rput[bl](8.5,-.7){$y$}
\psline[linecolor=black, linewidth=0.02](.4,0.2)(.4,-0.2)
\psline[linecolor=black, linewidth=0.02](1.4,0.2)(1.4,-0.2)
\psline[linecolor=black, linewidth=0.02](3.19,0.2)(3.19,-0.2)
\psline[linecolor=black, linewidth=0.02](5.69,0.2)(5.69,-0.2)
\psline[linecolor=black, linewidth=0.02](7.85,0.2)(7.85,-0.2)
\end{pspicture}\end{center}
}
\item (A2). \textit{Uncertain} quantized measurements: 
$$
\u y_i < \o y_{i-1} < \u y_{i+1}, \ \forall i\in\{2,\ldots,n-1\}.
$$
In this case, we have overlaps between the regions. In these overlaps, the measure is not deterministic, and may be any of the  two values.\\

{\begin{center}
\begin{pspicture}(0,-0.9)(8.8,0.9)
\psline[linecolor=black, linewidth=0.02, arrowsize=0.1cm 2.0,arrowlength=1.9,arrowinset=0.0]{->}(0.0,0)(7.8736844,0)(8.8,0)
\rput[bl](.3,-.7){0}
\rput[bl](3.1,-.8){$\underline y_i$}
\rput[bl](3.5,-.8){$\o y_{i-1}$}
\rput[bl](5.5,-.8){$\u y_{i+1}$}
\rput[bl](6.3,-.8){$\o y_i$}
\rput[bl](1.4,0.3){$\overbrace{\qquad\qquad\qquad  }^{Y_{i-1}}$}
\rput[bl](3.2,0.5){$\overbrace{\qquad\qquad\qquad\qquad\quad}^{Y_i}$}
\rput[bl](5.7,0.3){$\overbrace{\qquad\qquad\qquad}^{Y_{i+1}}$}
\rput[bl](8.5,-.7){$y$}
\psline[linecolor=black, linewidth=0.02](.4,0.2)(.4,-0.2)
\psline[linecolor=black, linewidth=0.02](1.4,0.2)(1.4,-0.2)
\psline[linecolor=black, linewidth=0.02](3.19,0.2)(3.19,-0.2)
\psline[linecolor=black, linewidth=0.02](3.5,0.2)(3.5,-0.2)
\psline[linecolor=black, linewidth=0.02](5.69,0.2)(5.69,-0.2)
\psline[linecolor=black, linewidth=0.02](6.4,0.2)(6.4,-0.2)
\psline[linecolor=black, linewidth=0.02](7.85,0.2)(7.85,-0.2)
\end{pspicture}\end{center}
}

\end{itemize}

For both cases, we define (open) regular domains:
$$\tilde Y_i :=Y_i \setminus (Y_{i-1}\cup Y_{i+1}),$$
and (closed) switching domains:
$$Y_{i\mid i+1} :=Y_{i}\cap Y_{i+1}$$
\com{where the measurement is undetermined, i.e. if $\xi \in Y_{i\mid i+1}$, then either $\xi \in Y_i$ or $\xi \in  Y_{i+1}$.}

For perfect measurements (A1), we have $\tilde Y_i=int Y_i$, and the switching domains $Y_{i\mid i+1}$ correspond to the lines $y(\xi)=\o y_i= \u y_{i+1}$.
For uncertain measurements (A2), the switching domains $Y_{i\mid i+1}$ become the regions $\{\xi \in \r:\u y_{i+1} \leq y(\xi) \leq \o y_i \}$.\\
\com{Unless otherwise specified, we consider in the following uncertain measurements.}

\subsection{Quantized control}
Given the risk of washout, our objective is to design a feedback controller that globally stabilizes System \eqref{eq-sys} towards a set-point. Given that measurements are quantized, the controller should be defined with respect to each region:
\beq\label{eq-u0}
\xi(t) \in Y_i \ \Leftrightarrow \ u(t)=D_i, \quad i=1,...,n.
\eeq
Here $D_i$ is the positive dilution rate in region $i$.
This control scheme leads to discontinuities in the vector fields. Moreover, in the switching domains, the control is undetermined. Thus, solutions of System \eqref{eq-sys} under Control law \eqref{eq-u0} are defined in the sense of Filippov, as the solutions of the differential inclusion \cite{Filippov}:
$$
\dot \xi \in H(\xi)
$$
where $H(\xi)$ is defined on regular domains $\tilde Y_i$ as the ordinary function $H(\xi)=f(\xi,D_i)$, and on switching domains $Y_{i\mid i+1}$  as the closed convex hull of the two vector fields in the two domains $i$ and $i+1$:
$$
H(\xi)=\overline{co}\{f(\xi,D_i),f(\xi,D_{i+1})\}.
$$
Following \cite{CasJonGou06,chaves2011exact}, a solution of System \eqref{eq-sys} under Control law \eqref{eq-u0} on $[0,T]$ is an absolutely continuous (w.r.t. $t$) function $\xi(t, \xi_0)$ such that  $\xi(\com{0}, \xi_0)= \xi_0$ and  $\dot \xi\in H(\xi)$ for almost all $t\in[0,T]$.

\section{Model analysis with a constant dilution}\label{sec-Dcst}

\begin{figure}[h]
\centering
\includegraphics[scale=0.45]{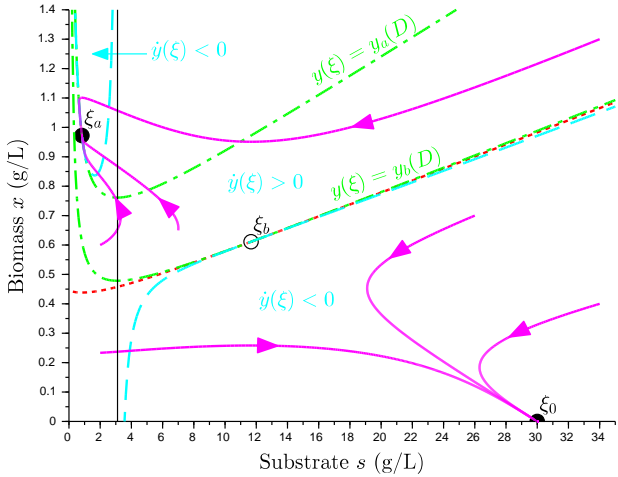} 
\caption{Phase portrait of System \eqref{eq-sys} with a constant dilution rate $u(t)=D\in (\mu(\si),\mu(\bar s))$ (case ii of Proposition \ref{prop-Dcst}). Magenta lines: trajectories, cyan dashed lines: nullcline $\dot y(\xi)=0$, green dash dotted lines: isolines $y(\xi)=y_a(D)$ and $y(\xi)=y_b(D)$, red dotted line: separatrix, black vertical line: $s=\bar s$, dark circles: stable equilibria, open circle: unstable equilibrium.  } 
\label{fig:Dcst}                                 
\end{figure} 

We first consider  System \eqref{eq-sys} when a constant dilution rate $D$ is applied in the whole space (i.e. $u(t)=D,\ \forall t\geq0$). In this case, the system is a classical ordinary differential equation  which can present bistability, with  a risk of washout. 
Let us denote $s_a(D)$ and $s_b(D)$ the two solutions, for $D\in(0,\mu(\bar s))$, of the equation $\mu(s)=D$, with $0<s_a(D)<\bar s < s_b(D)$. For the Haldane growth rate defined by (\ref{mus}), we have:
$$
s_{a}(D)=\frac{k_I}{2}\left(\frac{\bar \mu}{D}-1\right) - \sqrt{\left(\frac{k_I}{2}\left(\frac{\bar \mu}{D}-1\right)\right)^2 - k_S k_I}$$
$$
s_{b}(D)=\frac{k_I}{2}\left(\frac{\bar \mu}{D}-1\right) + \sqrt{\left(\frac{k_I}{2}\left(\frac{\bar \mu}{D}-1\right)\right)^2 - k_S k_I}.$$
The asymptotic behavior of the system can be summarized as follows:

\begin{prop}\label{prop-Dcst} Consider System \eqref{eq-sys} with a constant dilution rate $u(t)=D$  and initial conditions in the interior of the positive orthant.
\begin{itemize}
\item[(i)] If $D<\mu(\si)$, the system admits a globally exponentially stable equilibrium $\xi_a(D)=\left(s_a(D),\frac{\si-s_a(D)}{k}\right)$.
\item[(ii)] If $\mu(\si)<D<\mu(\bar s)$, the system admits two locally exponentially stable equilibria, a working point $\xi_a(D)=\left(s_a(D),\frac{\si-s_a(D)}{k}\right)$ and the washout $\xi_0=(\si,0)$, and a saddle point $\xi_b(D)=\left(s_b(D),\frac{\si-s_b(D)}{k}\right)$, see Figure~\ref{fig:Dcst}.
\item[(iii)]If $D>\mu(\bar s)$, the washout $\xi_0=(\si,0)$ is globally exponentially stable.
\end{itemize}
\end{prop}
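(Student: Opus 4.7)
The plan is to locate the equilibria, linearize to obtain local stability information, and then exploit the cascade structure provided by the conserved-like variable $z=s+kx$ to upgrade the 1-D analysis to the global conclusions required in (i) and (iii).

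First, I would find the equilibria. Setting $\dot x=0$ gives either $x=0$ (which forces $s=\si$, i.e.\ the washout $\xi_0$) or $\mu(s)=D$. In the second case $\dot s=0$ yields $x=(\si-s)/k$, so a non-washout equilibrium sits in the open positive orthant iff $s<\si$. Because $\mu$ is Haldane with maximum $\mu(\bar s)$ at $\bar s<\si$, the equation $\mu(s)=D$ has two positive roots $s_a(D)<\bar s<s_b(D)$ exactly when $D<\mu(\bar s)$; moreover $s_a(D)<\si$ always and $s_b(D)<\si$ iff $D>\mu(\si)$ (since $\mu$ is decreasing on $(\bar s,\infty)$). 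This produces the three equilibrium configurations claimed in (i)--(iii).

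Next, I would compute the Jacobian. At $\xi_0$ it is upper triangular with eigenvalues $-D$ and $\mu(\si)-D$, giving a saddle in (i) and an exponentially stable node in (ii)--(iii). At a non-washout equilibrium $\xi^\star=(s^\star,(\si-s^\star)/k)$ with $\mu(s^\star)=D$, a direct computation shows that $\det J^\star$ and $\mathrm{tr}\,J^\star$ have signs controlled by $\mathrm{sgn}\,\mu'(s^\star)$, which is positive at $s_a$ and negative at $s_b$. Hence $\xi_a$ is locally exponentially stable and $\xi_b$ is a saddle; this already supplies the local part of (ii).

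For the global statements in (i) and (iii) I would use the change of variable $z=s+kx$ that appeared in the proof of Lemma~\ref{lem-bound}: it satisfies $\dot z=D(\si-z)$, so $z(t)\to\si$ exponentially at rate $D$. The asymptotic dynamics therefore reduce to the one-dimensional invariant manifold $\{z=\si\}$, on which
\begin{equation*}
\dot x=\bigl(\mu(\si-kx)-D\bigr)\,x,\qquad x\in(0,\si/k].
\end{equation*}
In case (i), on this manifold $x_a=(\si-s_a(D))/k$ is the unique positive equilibrium and the sign of $\mu(\si-kx)-D$ forces global attraction from any $x(0)>0$; combining with the exponential decay of $z-\si$ via a cascade/ISS argument yields global exponential stability of $\xi_a$. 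In case (iii), $\mu(s)\le\mu(\bar s)<D$ everywhere gives $\dot x\le-(D-\mu(\bar s))\,x$, so $x\to 0$ exponentially; then $s=z-kx\to\si$ exponentially, proving global exponential stability of $\xi_0$.

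The main obstacle is lifting the manifold analysis to genuine \emph{global exponential} stability off the manifold in case (i): one needs a uniform (in $z$) contraction rate toward $x_a$. I would supply this either through a Lyapunov function of the form $V(s,x)=\tfrac12(z-\si)^2+W(x)$ with $W$ locally quadratic near $\xi_a$ chosen so the cross-terms are dominated, or by invoking standard results on exponentially stable cascades on compact positively invariant sets, using that Lemma~\ref{lem-bound} already provides the required boundedness.
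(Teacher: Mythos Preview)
The paper does not actually prove this proposition: its entire proof is a reference to \cite{hess2008design}. Your proposal therefore goes well beyond what the paper supplies, and the route you sketch---locating equilibria, linearizing, and then exploiting the mass-balance variable $z=s+kx$ with $\dot z=D(\si-z)$ to reduce the global analysis to the scalar dynamics on the exponentially attracting manifold $\{z=\si\}$---is the standard and correct argument for this class of chemostat models. The Jacobian computations and the sign analyses for $\xi_0$, $\xi_a$, $\xi_b$ are accurate, and the argument for case (iii) is clean and complete.

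One point worth sharpening in case (i): you identify uniformity in $z$ as the obstacle to genuine global exponential stability, but the more serious obstruction is the behavior as $x(0)\to 0^{+}$. The stable manifold of the saddle $\xi_0$ is the invariant axis $\{x=0\}$, so a trajectory starting at $(\si,\epsilon)$ lingers near $\xi_0$ for a time of order $(\mu(\si)-D)^{-1}\log(1/\epsilon)$ while $\|\xi(0)-\xi_a\|$ remains of order one; no uniform bound $\|\xi(t)-\xi_a\|\le M e^{-\lambda t}\|\xi(0)-\xi_a\|$ can survive the limit $\epsilon\to 0$. What your cascade argument (and the Lyapunov alternative you propose) genuinely yields is global asymptotic stability together with local exponential stability---equivalently, exponential convergence on every compact subset of the open orthant. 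That is exactly the right conclusion; just be aware that the proposition's wording is slightly informal on this point, and that neither of your suggested fixes will produce a constant $M$ that is uniform down to the boundary $x=0$.
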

\begin{proof}
See \cite{hess2008design}.
\end{proof}

 For $D\in(0,\mu(\bar s))$, let us define:
\begin{align*}
y_j(D)=\frac{\alpha D}{k}[\si-s_j(D)],\quad j=a,b.\\
\end{align*}
 $y_a(D)$ and $y_b(D)$ are the growth proxy obtained respectively at the equilibria $\xi_a(D)$ and $\xi_b(D)$ (if it exists)\footnote{if $D<\mu(\si)$, $\xi_b(D)$ does not exist and $y_b(D)<0$.}.

In order to design our control law, we need to provide some further properties of the system dynamics. In particular, we need to characterize $\dot y(\xi)$, the time derivative of $y(\xi)$ along a trajectory of System \eqref{eq-sys} with a constant dilution rate $u(t)=D$:
$$
\dot y(\xi)= \alpha \left[D (\si -s) - k \mu(s)x\right]\mu'(s)x + \alpha \mu(s) (\mu(s)-D) x. $$

 Let consider the following functions:
\begin{align*}
g_D:&\ s\longmapsto \frac{\mu(s)-D}{k\mu'(s)}+\frac{D(\si-s)}{k\mu(s)}\\
h^j_D:&\ s\longmapsto \frac{D(\si-s_j(D))}{k\mu(s)}, \quad j=a,b,
\end{align*}
\com{defined respectively on $(0,\bar s)\cup(\bar s,+\infty)$ and $(0,+\infty)$.}

In the $(s,x)$ plane, $g_D(s)$, $h^a_D(s)$ and $h^b_D(s)$ represent respectively the nullcline $\dot y(\xi)=0$ and the isolines $y(\xi)=y_a(D)$ and $y(\xi)=y_b(D)$ (i.e. passing through the equilibria $\xi_a(D)$ and $\xi_b(D)$), see Figure \ref{fig:Dcst}. Knowing that the nullcline $\dot y(\xi)=0$ is tangent to the isoline $y(\xi)=y_a(D)$ (resp. $y(\xi)=y_b(D)$) at the equilibrium point $\xi_a(D)$ (resp. $\xi_b(D)$),  we will determine in the next lemma the relative positions of these curves, see Fig. \ref{fig:Dcst}.

\begin{lem}\label{lem-y}
 \com{Consider System \eqref{eq-sys} with a constant dilution rate $u(t)=D$.} 
 \begin{itemize}
 \item[(i)]For $s\in(0,\bar s)$, we have $g_D(s)\geq h^a_D(s)$:  the nullcline $\dot y(\xi)=0$ is above the isoline $y(\xi)=y_a(D)$.
  \item[(ii)] For $s\in(\bar s,s_{in})$, we have $g_D(s)\leq h^b_D(s)$: the nullcline $\dot y(\xi)=0$ is below the isoline $y(\xi)=y_b(D)$.
 \end{itemize}
\end{lem}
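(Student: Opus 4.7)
The plan is to reduce both inequalities to explicit algebraic identities by exploiting the Haldane form of $\mu$. Set $d(s) := k_S + s + s^2/k_I$ so that $\mu(s) = \bar\mu s/d(s)$; direct differentiation gives
\begin{equation*}
\mu'(s) = \frac{\bar\mu(\bar s^2 - s^2)}{k_I\, d(s)^2},
\end{equation*}
using $k_S k_I = \bar s^2$, and rewriting $\mu(s)=D$ as a quadratic in $s$ yields the factorization
\begin{equation*}
\mu(s) - D \;=\; -\frac{D}{k_I}\cdot\frac{(s-s_a(D))(s-s_b(D))}{d(s)},
\end{equation*}
together with Vieta's product relation $s_a(D)\,s_b(D) = k_S k_I = \bar s^2$. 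This last identity is the crux of the argument.

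Substituting into $g_D(s) - h^a_D(s) = (\mu(s)-D)/(k\mu'(s)) + D(s_a - s)/(k\mu(s))$, I factor out the common prefactor $D\,d(s)/(k\bar\mu)$, pull the evident $(s-s_a)$ out of both terms, place the remaining bracket over the common denominator $s(\bar s^2 - s^2)$, and use $\bar s^2 = s_a s_b$ to recognize a second hidden factor of $(s-s_a)$ in the numerator. I expect this to yield the clean identity
\begin{equation*}
g_D(s) - h^a_D(s) \;=\; \frac{D\, d(s)\, s_b(D)\,(s - s_a(D))^2}{k\,\bar\mu\, s\,(\bar s^2 - s^2)},
\end{equation*}
and the analogous computation with $a\leftrightarrow b$ gives
\begin{equation*}
g_D(s) - h^b_D(s) \;=\; \frac{D\, d(s)\, s_a(D)\,(s - s_b(D))^2}{k\,\bar\mu\, s\,(\bar s^2 - s^2)}.
\end{equation*}

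From here both claims are a one-line sign check. On $(0,\bar s)$ the denominator $s(\bar s^2-s^2)$ is strictly positive and the numerator non-negative, proving (i); on $(\bar s, \si)$ the factor $\bar s^2-s^2$ flips sign while everything else stays non-negative, proving (ii). The unique zero on the relevant interval is the double zero at $s=s_a(D)$, resp.\ $s=s_b(D)$, which matches precisely the tangency of the nullcline with the isoline at the equilibrium recalled just above the lemma. The only real obstacle is spotting the Vieta identity $s_a(D)\,s_b(D) = \bar s^2$: without it the numerator is a generic quartic in $s$ and the perfect-square factorization is invisible; once it is used, both parts collapse to a sign-of-denominator test.
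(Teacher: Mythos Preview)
Your proof is correct. The closed-form identities you derive are right (in particular the Vieta relation $s_a(D)s_b(D)=k_Sk_I=\bar s^2$ is exactly what collapses the numerator to a perfect square), and the sign checks on $(0,\bar s)$ and $(\bar s,\si)$ are immediate.

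Your route differs from the paper's in a useful way. The paper treats the two parts asymmetrically: for (i) it studies the derivative of $\varphi_D^a=g_D-h_D^a$, argues that it is negative on $(0,s_a(D))$ and positive on $(s_a(D),\bar s)$ using monotonicity and concavity properties of $\mu$, and concludes that the minimum value $\varphi_D^a(s_a(D))=0$ gives the sign; for (ii) it substitutes the explicit Haldane expression, reduces $\varphi_D^b=0$ to a quadratic with the single root $s_b(D)$, and then invokes continuity plus the limits at $\bar s^+$ and $+\infty$ to pin down the sign. Your approach is instead a single algebraic computation that handles both parts symmetrically and yields explicit rational expressions; the tangency at the equilibria is visible as the double zeros $(s-s_a)^2$ and $(s-s_b)^2$, and no separate limit or monotonicity argument is needed. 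The price is that you commit to the Haldane form from the outset, whereas the paper's argument for (i) is phrased for a generic increasing concave $\mu$; but since the paper's part (ii) uses the explicit Haldane form anyway, your uniform treatment is arguably the cleaner of the two.
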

\begin{proof}
See Appendix.
\end{proof}
 
 This allows us to determine the monotonicity of $y(\xi)$ in a region of interest (for the design of the control law). 
 
 \begin{lem}\label{lem-doty}
  Consider System \eqref{eq-sys} with a constant dilution rate $u(t)=D$. 
 For $\xi\in \r$ such that $y_b(D) <y(\xi)<y_a(D)$, we have $\dot y(\xi)>0$.
\end{lem}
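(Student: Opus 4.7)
The plan is to convert both the hypothesis and the conclusion into inequalities between $x$ and $s$-dependent functions, and then invoke Lemma \ref{lem-y} as a sandwich. Fix $\xi=(s,x)$ in the interior of $\r$.

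First I rewrite the hypothesis in $(s,x)$-coordinates. Since $y(\xi)=\alpha\mu(s)x$ and $y_j(D)=\alpha D(\si-s_j(D))/k$ for $j=a,b$, dividing the inequality $y_b(D)<y(\xi)<y_a(D)$ by $\alpha\mu(s)>0$ transforms it into
$$h^b_D(s)<x<h^a_D(s).$$

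Second I determine the sign of $\dot y$ from the formula given in the excerpt. Factoring out $\alpha x>0$, the sign of $\dot y(\xi)$ agrees with the sign of the affine-in-$x$ quantity
$$B(s,x):=-k\mu(s)\mu'(s)\,x+D(\si-s)\mu'(s)+\mu(s)(\mu(s)-D),$$
which vanishes precisely at $x=g_D(s)$ whenever $\mu'(s)\neq 0$. The slope of $B(s,\cdot)$ is $-k\mu(s)\mu'(s)$, negative on $(0,\bar s)$ and positive on $(\bar s,+\infty)$. Consequently $\dot y>0$ is equivalent to $x<g_D(s)$ on $(0,\bar s)$ and to $x>g_D(s)$ on $(\bar s,+\infty)$.

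Third, I combine with Lemma \ref{lem-y} to place $x$ on the right side of $g_D(s)$ in each case. For $s\in(0,\bar s)$, Lemma \ref{lem-y}(i) gives $g_D(s)\geq h^a_D(s)$, so the hypothesis $x<h^a_D(s)$ yields $x<g_D(s)$ and hence $\dot y>0$. Symmetrically, for $s\in(\bar s,\si)$, Lemma \ref{lem-y}(ii) gives $g_D(s)\leq h^b_D(s)$, so the hypothesis $x>h^b_D(s)$ yields $x>g_D(s)$ and hence $\dot y>0$.

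The main subtlety is the degenerate slice $s=\bar s$ where $\mu'$ vanishes and $g_D$ is undefined, so the previous case analysis does not directly apply. I handle it by direct computation: $\dot y(\xi)=\alpha\mu(\bar s)(\mu(\bar s)-D)x$, which is strictly positive because $x>0$ and $D<\mu(\bar s)$, the latter being implicit in the existence of $y_a(D)$ and $y_b(D)$.
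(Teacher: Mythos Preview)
Your proof is correct and follows essentially the same approach as the paper's: translate the hypothesis into the inequality $h^b_D(s)<x<h^a_D(s)$, identify the sign regions of $\dot y$ relative to the nullcline $x=g_D(s)$, and then apply Lemma~\ref{lem-y} on each side of $\bar s$. You are in fact more explicit than the paper in carrying out the sign analysis of $B(s,x)$ and in treating the degenerate slice $s=\bar s$ separately, which the paper's short proof leaves implicit.
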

\begin{proof}
See Appendix.
\end{proof}

\section{Control with quantized measurements}\label{sec-control}
\subsection{Control design}
\com{Our goal is to globally stabilize the system towards an equilibrium with a high productivity (the productivity is the output $\alpha \mu(s)x$), corresponding to a high dilution rate $D_n$ where there is bistability in open loop (case (ii) of Proposition \ref{prop-Dcst}).} Number $n$ is the number of measurements (see section \ref{measu}) and the control is such that $D_1 < D_2< \ldots < D_n$. We consider the following control law, based on the quantized measurements $y(\xi)$, and constant within a given region:
\beq\label{eq-u}
\forall t \geq 0, \quad \xi(t) \in Y_i \ \Leftrightarrow  u(\com{t})=D_i, 
\eeq
given that the following conditions are fulfilled:
\begin{align}
& y_b(D_i)<\u y_i \quad  i=1,\ldots,n, \label{eq-condi1} \\ 
& y_a(D_i)>\o y_i \quad i=1,\ldots,n-1,\label{eq-condi2} \\
&y_a(D_n)>\o y_{n-1}. \label{eq-condi3}
\end{align}

These conditions make the equilibrium $\xi_a(D_n)$ globally stable, as we will see below. In Section \ref{sec-choose}, we will precise  how to choose the $D_i$ such that these conditions hold.\\
In order to prove the asymptotic behavior of System \eqref{eq-sys} under Control law (\ref{eq-u}-\ref{eq-condi3}), the study will be divided into three steps:
\begin{itemize}
\item the dynamics in one region,
\item the transition between two regions,
\item the  global dynamics.
\end{itemize}
This approach is similar to those deducing the global dynamics from a ``transition graph'' of possible transitions between regions \cite{belta2006controlling}.

\subsection{Dynamics in one region with a given dilution: exit of domain}
We first focus on  a region $Y_i, \ i<n$. A constant dilution $D_i$ - such that Conditions (\ref{eq-condi1}-\ref{eq-condi2}) for $i$ hold - is applied. These conditions
 guarantee that the stable operating equilibrium for this dilution  (see  Proposition \ref{prop-Dcst}) is located in an upper region $Y_j$, $j>i$, while the saddle point is located in a lower region $Y_k$, $k<i$. This allows us to establish the following lemma:
 
 \begin{lem}\label{lem-1region}
For any $i\in\{1,...,n-1\}$, consider System \eqref{eq-sys} under a constant control $u(t)=D_i$, such that Conditions (\ref{eq-condi1}-\ref{eq-condi2}) for $i$ hold.  All solutions with initial conditions in $Y_i$ leaves this domain, crossing the boundary $ y(\xi) = \o y_i $.
\end{lem}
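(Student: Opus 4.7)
The plan is to show that along any trajectory starting in $Y_i$, the output $y(\xi(t))$ is strictly increasing with a uniform positive rate, so that it must cross the upper level $\o y_i$ in finite time.

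First, combining Conditions (\ref{eq-condi1}) and (\ref{eq-condi2}) with the definition of $Y_i$, I would deduce that for every $\xi \in Y_i$,
\[
y_b(D_i) \;<\; \u y_i \;\le\; y(\xi) \;\le\; \o y_i \;<\; y_a(D_i),
\]
placing the entire region strictly inside the strip on which Lemma \ref{lem-doty} guarantees $\dot y > 0$. Since the initial condition lies in the open positive orthant, the equation $\dot x = (\mu(s)-D_i)x$ keeps $x(t) > 0$ for all $t$, so Lemma \ref{lem-doty} applies at every point of the trajectory while $\xi(t) \in Y_i$. This already proves half of the claim: $y(\xi(t))$ is strictly increasing as long as the trajectory stays in $Y_i$, which rules out any exit through the lower boundary $y(\xi) = \u y_i$.

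The remaining task is to guarantee that the exit occurs in finite time. By Lemma \ref{lem-bound} the trajectory lies in a compact subset $K \subset \r$, and strict monotonicity yields $y(\xi(t)) \ge y(\xi_0) > 0$ for all $t$. Writing $y(\xi) = \alpha \mu(s) x$ and using that $\mu$ is bounded above on $K$, one obtains a uniform lower bound $x(t) \ge x_{\min} > 0$. Hence the trajectory remains in the compact set $K \cap Y_i \cap \{x \ge x_{\min}\}$, on which $\dot y$ is continuous and strictly positive, and therefore bounded below by some $m > 0$. Integrating gives $y(\xi(t)) \ge y(\xi_0) + m t$ for as long as $\xi(t) \in Y_i$, forcing exit at a finite time $T \le (\o y_i - y(\xi_0))/m$. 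By continuity of the trajectory and monotonicity of $y$, the first exit necessarily occurs at $y(\xi(T)) = \o y_i$.

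The step I expect to be most delicate is the uniform lower bound on $x(t)$: without it, the strict positivity $\dot y > 0$ could in principle degenerate near the axis $\{x = 0\}$ and allow only an infinite-time approach to the washout. What rescues the argument is that the monotonicity of $y$ propagates the initial positivity $y(\xi_0) > 0$ to all future times, which in turn bounds $x$ away from zero uniformly.
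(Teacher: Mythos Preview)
Your argument is correct and shares with the paper the essential first step: Conditions~(\ref{eq-condi1})--(\ref{eq-condi2}) place $Y_i$ strictly inside the strip $\{y_b(D_i)<y<y_a(D_i)\}$, so Lemma~\ref{lem-doty} gives $\dot y>0$ throughout $Y_i$, which already forces any exit to occur through the upper boundary $y(\xi)=\o y_i$.

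Where you diverge from the paper is in the finite-time exit argument. The paper sets $V(\xi)=y_a(D_n)-y(\xi)$ and invokes LaSalle's invariance principle on the bounded domain $\Omega_1=\{\xi\in Y_i: x\le \max(z(0),\si)/k,\ s\le\max(s(0),\si)\}$: since the set $\{\dot V=0\}\cap\Omega_1$ is empty, no trajectory can remain in $\Omega_1$, and the repulsive character of the other boundary pieces forces exit through $y=\o y_i$. You instead extract a quantitative lower bound $\dot y\ge m>0$ by compactness, relying on the observation that monotonicity of $y$ propagates the initial positivity $y(\xi_0)>0$ forward, which in turn bounds $x$ away from zero. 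Your route is more elementary and yields an explicit exit-time estimate $T\le(\o y_i-y(\xi_0))/m$; the paper's LaSalle argument avoids that bookkeeping but is used somewhat informally as an ``exit'' criterion rather than a convergence criterion. One small refinement worth making: to guarantee that the compact set on which you take the infimum of $\dot y$ sits inside the open orthant (where the proof of Lemma~\ref{lem-doty} actually lives), replace $Y_i$ by $\{y(\xi_0)\le y(\xi)\le\o y_i\}$ in your compact set --- this automatically bounds both $x$ and $s$ away from zero and is exactly where the trajectory stays.
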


\begin{proof}
Let us consider the function $V(\xi)=y_a(D_n)-y(\xi)$ on $Y_i$.
Given Conditions (\ref{eq-condi1}-\ref{eq-condi2}), we get $\forall \xi \in Y_i$:
  $$y_b(D_i)<\u y_i<y(\xi)<\o y_i<y_a(D_i).$$
   Since a constant dilution rate $D_i$ is applied, we can apply Lemma \ref{lem-doty} to conclude that $\dot y(\xi)>0$.\\
Thus, $V(\xi)$ is decreasing on $Y_i$. 
Recalling that the trajectories are also bounded (Lemma \ref{lem-bound}), we can apply LaSalle invariance theorem \cite{lasalle1976stability} on the domain $\Omega_1:=\{\xi\in Y_i\mid x\leq \max(z(0),\si)/k,\ s\leq \max(s(0),\si) \}$. Given that the set of all the points in $\Omega_1$ where $\dot V(\xi)=0$ is empty,  any trajectory starting in $\Omega_1$ will leave this region. The boundaries $x=\max(z(0),\si)/k $ and $s=\max(s(0),\si)$ are repulsive (see Proof of Lemma \ref{lem-bound}). Finally, the boundary $y(\xi)=\u y_i$ corresponds to the maximum of $V(\xi)$ on $\Omega_1$, so every trajectory will leave this domain, crossing the boundary $ y(\xi) = \o y_i $.
\end{proof}

\subsection{Transition between two regions}

Now we will characterize the transition between regions (as we have seen above, the intersection can be either a simple curve in the case of perfect measurements, or a region with non empty interior in the uncertain case):

 \begin{lem}\label{lem-transition}
For any $i\in\{1,...,n-1\}$, consider System \eqref{eq-sys} under Control law \eqref{eq-u} with Conditions (\ref{eq-condi1}-\ref{eq-condi2}) for $i, i+1$\footnote{or if $i=n-1$, Conditions (\ref{eq-condi1}) for $n-1,n$, Condition (\ref{eq-condi2}) for $n-1$, and  Condition (\ref{eq-condi3}).}.  All trajectories with initial conditions in $ \tilde Y_i \cup Y_{i\mid i+1}$ enter the regular domain $\tilde Y_{i+1}$.
\end{lem}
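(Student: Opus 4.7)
The plan is to show that $y(\xi(t))$ is strictly increasing along every Filippov solution with initial condition in $\tilde Y_i \cup Y_{i\mid i+1}$, with $\dot y$ uniformly bounded below by a positive constant, so the trajectory crosses $y(\xi)=\o y_i$ in finite time and enters $\tilde Y_{i+1}$. The key observation is that both candidate controls $D_i$ and $D_{i+1}$ drive $y$ upward throughout $\tilde Y_i \cup Y_{i\mid i+1}$, so no selection from the Filippov inclusion can block the transition.

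First I would handle the regular domain $\tilde Y_i$, where the control is unambiguously $D_i$. Using A2, $\tilde Y_i$ is contained in $\{\o y_{i-1}<y(\xi)<\u y_{i+1}\}$, and Conditions \eqref{eq-condi1}-\eqref{eq-condi2} for $i$ give the chain $y_b(D_i)<\u y_i\leq\o y_{i-1}<y(\xi)<\u y_{i+1}\leq\o y_i<y_a(D_i)$, so Lemma \ref{lem-doty} yields $\dot y>0$. Next I would treat the switching domain $Y_{i\mid i+1}$, where $y(\xi)\in[\u y_{i+1},\o y_i]$. Checking each branch separately, plugging in $D=D_i$ gives $y_b(D_i)<\u y_{i+1}\leq y(\xi)\leq\o y_i<y_a(D_i)$, and plugging in $D=D_{i+1}$ gives $y_b(D_{i+1})<\u y_{i+1}\leq y(\xi)\leq\o y_i<\o y_{i+1}<y_a(D_{i+1})$ (using the ordering $\o y_i<\o y_{i+1}$). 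The boundary case $i=n-1$ is similar, with Condition \eqref{eq-condi3} supplying $\o y_{n-1}<y_a(D_n)$ in place of the missing $\o y_n$. Hence Lemma \ref{lem-doty} gives $\dot y>0$ for both $f(\xi,D_i)$ and $f(\xi,D_{i+1})$ everywhere on $Y_{i\mid i+1}$.

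To propagate this to the Filippov inclusion, I would note that $\dot y=\nabla y(\xi)\cdot\dot\xi$ is linear in $\dot\xi$, so any convex combination in $H(\xi)=\overline{co}\{f(\xi,D_i),f(\xi,D_{i+1})\}$ inherits the strict positivity. Thus $\dot y>0$ along every Filippov solution on $\tilde Y_i\cup Y_{i\mid i+1}$. For finite-time exit, Lemma \ref{lem-bound} gives boundedness, and since $y(\xi(t))\geq y(\xi(0))>0$ (the initial condition lies in the interior of $\r$) the trajectory stays in a compact subset of the interior of the positive orthant, where the continuous positive function $\dot y$ attains a minimum $c>0$. Integration yields exit through $y(\xi)=\o y_i$ in time at most $(\o y_i-y(\xi(0)))/c$; by A2 we have $\o y_i<\u y_{i+2}$ (or, for $i=n-1$, the trajectory directly lands in $\tilde Y_n=\{y(\xi)>\o y_{n-1}\}$), so the trajectory enters $\tilde Y_{i+1}$.

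The main obstacle, absent in Lemma \ref{lem-1region} which used a single constant control, is handling the Filippov inclusion on the two-dimensional switching region $Y_{i\mid i+1}$ rather than on a single curve. This is overcome by the sign-robustness argument above: once $\dot y>0$ is verified at both extreme points of $H(\xi)$, convexity forces it for every admissible selection, and the analysis reduces to a scalar monotonicity argument on $y$.
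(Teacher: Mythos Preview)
Your argument is correct and follows essentially the same route as the paper: both show that $y(\xi)$ is strictly increasing on $\tilde Y_i\cup Y_{i\mid i+1}$ by invoking Lemma~\ref{lem-doty} for each of the two vector fields $f(\xi,D_i)$ and $f(\xi,D_{i+1})$ and then passing to the convex hull by linearity of $\nabla y\cdot\dot\xi$. The only difference is in the finite-time exit step: the paper packages this as a LaSalle argument on a bounded domain $\Omega_2$ with the Lyapunov-like function $V(\xi)=y_a(D_n)-y(\xi)$, whereas you extract a uniform positive lower bound on $\dot y$ via compactness and integrate---a slightly more direct variant of the same idea.
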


\begin{proof}
 First, we consider $i\neq n-1$. We will follow the same reasoning as for the previous lemma, applying LaSalle theorem on a domain 
 \begin{align*}
 \Omega_2:=\{\xi\in int\r & \mid x\leq \max(z(0),\si)/k,\\& s\leq \max(s(0),\si),\ \o y_{i-1}<y(\xi)<y^\dag \}
 \end{align*}
\com{for any $y^\dag\in \tilde Y_{i+1}$.} We can show that the functional $V(\xi)=y_a(D_n)-y(\xi)$  is decreasing on $Y_{i}$ whenever $u=D_i$. Similarly, $V(\xi)$  is also decreasing on $Y_{i+1}$ whenever $u=D_{i+1}$.
Now under Control law \eqref{eq-u}, we have shown that $V(\xi)$  is decreasing on the regular domains $\tilde Y_{i}$ and $\tilde Y_{i+1}$. $V(\xi)$ is a regular $C^1$ function, and can be differentiated along the differential inclusion. On the switching domains $Y_{i\mid i+1}$, we have:
$$
\dot V(\xi)\in \overline{co}\left\lbrace\frac{\partial V}{\partial x} f(\xi,D_i),\frac{\partial V}{\partial x}f(\xi,D_{i+1})\right\rbrace <0.
$$
Thus, $V(\xi)$ is decreasing on $\Omega_2$. Following the proof of Lemma \ref{lem-1region} concerning the boundaries, we can deduce that every trajectory will reach the boundary $y(\xi)=y^\dag$, i.e. it will enter  $\tilde Y_{i+1}$.\\
For $i=n-1$, taking \com{any $y^\dag\in(\o y_{n-1},y_a(D_n))$}, we can show similarly that  $V(\xi)$ is decreasing on $\Omega_2$ so every trajectory will enter the region $\tilde Y_n$.
\end{proof}

Following the same proof, we can show that the reverse path is not possible, in particular for the last region:

 \begin{lem}\label{lem-transition-inv}
Consider System \eqref{eq-sys} under Control law \eqref{eq-u} with   Conditions (\ref{eq-condi1}) for $n-1,n$, Condition (\ref{eq-condi2}) for $n-1$, and  Condition (\ref{eq-condi3}).  The regular domain $\tilde Y_{n}$ is positively invariant.
\end{lem}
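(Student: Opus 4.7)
The plan is to argue that the only way a Filippov trajectory can exit $\tilde Y_n$ is through the boundary $\{y(\xi)=\overline y_{n-1}\}$, and to show that the vector field on that boundary points strictly into $\tilde Y_n$ under every admissible selection. Since there is no region above $Y_n$, we have $\tilde Y_n=Y_n\setminus Y_{n-1}=\{\xi\in\mathbb{R}_+^2 : y(\xi)>\overline y_{n-1}\}$. By Lemma~\ref{lem-bound} trajectories are bounded inside the positive orthant, and the boundaries $s=0$, $x=0$, $s=\max(s(0),s_{in})$, $kx=\max(z(0),s_{in})$ are either invariant or repulsive (as already used in the proof of Lemma~\ref{lem-1region}), so the only potential exit occurs across the level set $\{y(\xi)=\overline y_{n-1}\}$.

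Next I would show that $\dot y(\xi)>0$ pointwise on this level set under each of the two possible vector fields. Points of $\{y(\xi)=\overline y_{n-1}\}$ lie in the switching region $Y_{n-1\mid n}$, so the Filippov inclusion there is $\overline{co}\{f(\xi,D_{n-1}),f(\xi,D_n)\}$. For the vector field $f(\xi,D_{n-1})$, Lemma~\ref{lem-doty} gives $\dot y(\xi)>0$ provided $y_b(D_{n-1})<y(\xi)<y_a(D_{n-1})$; the left inequality follows from Condition~\eqref{eq-condi1} at $i=n-1$ together with $\underline y_{n-1}\le \overline y_{n-1}$, and the right inequality is exactly Condition~\eqref{eq-condi2} at $i=n-1$. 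For the vector field $f(\xi,D_n)$, I need $y_b(D_n)<\overline y_{n-1}<y_a(D_n)$; the right inequality is Condition~\eqref{eq-condi3}, and the left inequality follows from Condition~\eqref{eq-condi1} at $i=n$, which gives $y_b(D_n)<\underline y_n\le \overline y_{n-1}$ (the last inequality holding both in the perfect case, where it is an equality, and in the uncertain case by assumption (A2)).

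Because $\dot y$ is linear in $f$, the convex hull condition yields $\dot y(\xi)>0$ for \emph{every} Filippov selection on the switching domain $Y_{n-1\mid n}$, and in particular at every point of $\{y(\xi)=\overline y_{n-1}\}$. Hence $y(\xi(t))$ is strictly increasing as a trajectory crosses this boundary, ruling out any exit from $\tilde Y_n$ downward; combined with the boundedness argument above, this gives positive invariance of $\tilde Y_n$.

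The only subtlety I anticipate is the treatment of the switching set itself: a trajectory starting in $\tilde Y_n$ might slide along $\{y(\xi)=\overline y_{n-1}\}$, so one must argue at the level of the differential inclusion rather than pointwise under a single vector field. This is exactly the same device used in the proof of Lemma~\ref{lem-transition} — differentiating the functional $V(\xi)=y_a(D_n)-y(\xi)$ along the inclusion and invoking the convex hull structure — and once phrased that way the conclusion is immediate. No new estimates beyond those already used in Lemmas~\ref{lem-doty} and~\ref{lem-transition} should be required.
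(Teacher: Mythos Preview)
Your argument is correct and matches the paper's approach: the paper does not spell out a separate proof but simply states that the reasoning of Lemma~\ref{lem-transition} applies, and you have carried out exactly that reasoning, verifying via Lemma~\ref{lem-doty} that $\dot y>0$ under both $D_{n-1}$ and $D_n$ on the relevant boundary and then passing to the Filippov convex hull. The only cosmetic remark is that, since $\tilde Y_n$ is a regular domain where only $u=D_n$ is active, the strict increase of $y$ in the strip $(\overline y_{n-1},y_a(D_n))$ under $D_n$ alone already prevents the trajectory from ever reaching the boundary, so the convex-hull step is more of a robustness check than a necessity here.
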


\subsection{Global dynamics}
\com{Now we are in a position to present the main result of the paper:}
\begin{prop} \label{Propo}
Control law (\ref{eq-u}) \com{under Conditions (\ref{eq-condi1}-\ref{eq-condi3}) with perfect or uncertain measurements (A1 or A2)} globally stabilizes System \eqref{eq-sys} towards the point $\xi_a(D_n)$.
\end{prop}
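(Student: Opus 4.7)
The plan is to combine the three preceding lemmas in a cascade: push any trajectory forward through the chain of regions until it reaches $\tilde Y_n$, trap it there by positive invariance, and then drive it to $\xi_a(D_n)$ with a Lyapunov/LaSalle argument inside that final region.

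For the first stage, any $\xi_0\in int\,\r$ sits in some $\tilde Y_i$ or in a switching domain $Y_{i\mid i+1}$. If $i<n$, Lemma \ref{lem-transition} (applied with index $i$, or with $i-1$ when $\xi_0\in Y_{i-1\mid i}$) yields a finite time at which the trajectory enters $\tilde Y_{i+1}$; iterating at most $n-1$ times produces $T\ge 0$ with $\xi(T)\in\tilde Y_n$. The second stage is then immediate from Lemma \ref{lem-transition-inv}: $\tilde Y_n$ is positively invariant, so $\xi(t)\in\tilde Y_n$ for all $t\ge T$, and on this interval Control law \eqref{eq-u} freezes the input at $D_n$, reducing System \eqref{eq-sys} to the autonomous ODE covered by Proposition \ref{prop-Dcst}.

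For the third stage I would reuse the candidate $V(\xi)=y_a(D_n)-y(\xi)$ already used in the proofs of Lemmas \ref{lem-1region} and \ref{lem-transition}. On $\tilde Y_n$, Conditions \eqref{eq-condi1} (for $n$) and \eqref{eq-condi3} give $y(\xi)>\o y_{n-1}>y_b(D_n)$, so whenever also $y(\xi)<y_a(D_n)$, Lemma \ref{lem-doty} yields $\dot V<0$ strictly. Combined with the boundedness provided by Lemma \ref{lem-bound}, LaSalle's invariance principle then forces the $\omega$-limit set to lie in the largest invariant subset of $\{\dot y=0\}\cap\tilde Y_n$, which contains only the equilibrium $\xi_a(D_n)$.

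The main obstacle is the subset of $\tilde Y_n$ where $y(\xi)\ge y_a(D_n)$, since there the sign of $\dot y$ is not controlled by Lemma \ref{lem-doty}. To close this case I would exploit the planar structure of the autonomous dynamics under $u\equiv D_n$: $\tilde Y_n$ is bounded (Lemma \ref{lem-bound}), positively invariant, and contains no equilibrium other than $\xi_a(D_n)$, because the saddle $\xi_b(D_n)$ and the washout $(\si,0)$ both satisfy $y\le y_b(D_n)<\o y_{n-1}$ and thus lie outside $\tilde Y_n$. Poincar\'e--Bendixson then forces $\omega$-limit sets to be either $\{\xi_a(D_n)\}$ or a closed orbit, and closed orbits are excluded by noting that $z=s+kx$ obeys $\dot z=D_n(\si-z)$, so $z(t)\to\si$ exponentially and the asymptotic dynamics collapse onto the one-dimensional invariant set $\{s+kx=\si\}$, on which $\xi_a(D_n)$ is the unique attractor within $\{y>y_b(D_n)\}$. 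This yields global convergence to $\xi_a(D_n)$ and completes the proof.
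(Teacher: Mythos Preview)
Your argument follows the paper's proof almost exactly for the first two stages: the paper also invokes Lemmas \ref{lem-transition} and \ref{lem-transition-inv} to conclude that every trajectory eventually enters $\tilde Y_n$ and stays there, after which the closed loop reduces to the autonomous ODE with $u\equiv D_n$.

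Where you diverge is in the third stage. The paper dispatches this in one line, saying only that $\xi_a(D_n)$ is the sole equilibrium of the $D_n$-dynamics lying in $\tilde Y_n$ and that ``it is easy to check that $\tilde Y_n$ is in the basin of attraction of $\xi_a(D_n)$'', implicitly leaning on the phase portrait of Proposition \ref{prop-Dcst}. You instead supply an explicit argument: a LaSalle step with $V=y_a(D_n)-y$ on the strip $\{y_b(D_n)<y<y_a(D_n)\}$, and then, to handle $\{y\ge y_a(D_n)\}$, a Poincar\'e--Bendixson/monotone-$z$ argument that collapses the $\omega$-limit set onto the invariant line $\{s+kx=\si\}$ and identifies $\xi_a(D_n)$ as the only possible limit. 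This is sound and is precisely the kind of justification the paper omits; in fact the $z$-reduction alone (which the paper uses elsewhere) already suffices, so your appeal to Poincar\'e--Bendixson is a slight redundancy rather than a gap.

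One small slip: $\tilde Y_n$ is not bounded as a set (the Haldane $\mu$ vanishes at infinity, so the superlevel set $\{y(\xi)\ge c\}$ is unbounded). What Lemma \ref{lem-bound} gives you is that forward \emph{orbits} are bounded, which is all your $\omega$-limit argument actually needs; just rephrase accordingly.
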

\begin{proof}
From Lemmas \ref{lem-transition} and \ref{lem-transition-inv}, we can deduce that every trajectory will enter the regular domain $\tilde Y_n$, and that this domain is positively invariant.

System \eqref{eq-sys} under a constant control $u(t)=D_n$ has two non-trivial equilibria (see Proposition \ref{prop-Dcst}): $\xi_a(D_n)$, and $\xi_b(D_n)$.
The growth proxy at these two points satisfy $y_a(D_n)>\o y_{n-1}$ and $y_b(D_n)<\u y_n$  (Conditions (\ref{eq-condi1},\ref{eq-condi3})), so there is only one equilibrium in $\tilde Y_n$: $\xi_a(D_n)$. Moreover, it is easy to check that $\tilde Y_n$ is in  the basin of attraction of $\xi_a(D_n)$, therefore all  trajectories will converge toward this equilibrium.
 
\end{proof}

\section{Implementation of the control law}\label{sec-choose1}

\begin{figure*}[!ht]
\centering
\includegraphics[trim = 5mm 00mm 20mm 0mm, clip,scale=.8]{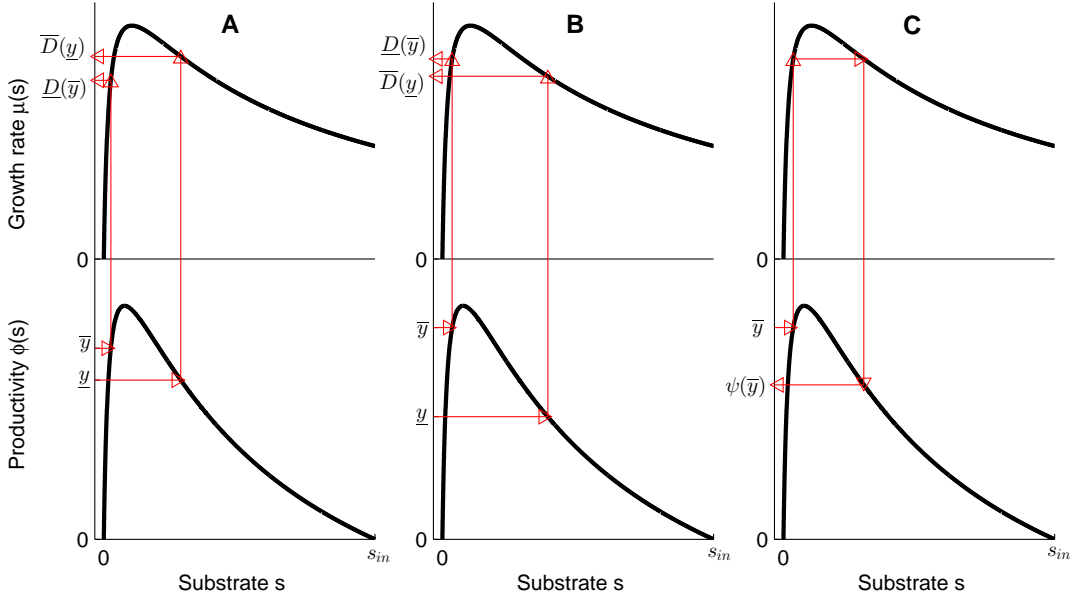} 
\caption{How to choose $D$ in one region (see Section \ref{sec-choose}). A: One can choose any $D\in (\u D(\o y), \o D(\u y))$; B: it is impossible to fulfill Conditions (\ref{eq-condi1}-\ref{eq-condi2}), given that $\u D(\o y)> \o D(\u y))$; C: $\psi(\o y)$ represents the lower bound for $\u y$ (limit case).} 
\label{fig:tuning1}                                
\end{figure*} 

\subsection{How to fulfill Conditions (\ref{eq-condi1}-\ref{eq-condi3})}\label{sec-choose}
The global stability of the control law is based on  Conditions (\ref{eq-condi1}-\ref{eq-condi3}). We now wonder how to easily check if these conditions  hold, or how to choose the dilution rates and/or to define the regions in order to fulfill these conditions. In this purpose, a graphical approach can be used.

As an example, we will consider the case where the regions are imposed (by technical constraints) and we want to find the different dilution rates $D_i$ such that Conditions (\ref{eq-condi1}-\ref{eq-condi3}) hold.\\
Our objective is to globally stabilize the equilibrium point $\xi_a(D^*)\in \tilde Y_n$, with $\mu(\si)<D^*<\mu(s^\diamond)$ ($s^\diamond$ is defined just after).\\

Let $\phi(s):=\frac{\alpha}{k}\mu(s)(\si-s)$, which represents the steady state productivity. On $[0,\si]$, $\phi(s)$ admits a maximum for 
$$s^\diamond :=\frac{\si}{1+\sqrt{1+\frac{\si}{k_S}\left(1+\frac{\si}{k_I}\right)}} <\bar s.$$
Note that we impose $D^*<\mu(s^\diamond)$  given that for any $D^*>\mu(s^\diamond)$, the same productivity can be achieved with a smaller dilution rate, leading to a reduced risk of instability. \\

 Let us denote $s_c(y)$ and $s_d(y)$ the two solutions, for $y\in(0,\phi(s^\diamond))$, of the equation $\phi(s)=y$, with $0<s_c(y)<s^\diamond < s_d(y)<\si$.\\
For the Haldane growth rate, we have:
$$
s_{c,d}(y)=\frac{\bar \mu \si - \frac{ky}{\alpha}  \mp \sqrt{\left(\frac{ky}{\alpha}-\bar\mu \si\right)^2-4 k_S\frac{ky}{\alpha}(\frac{ky}{\alpha k_I} +\bar \mu)}}{2\left(\frac{ky}{\alpha k_I}+\bar\mu \right)}.
$$\\

\begin{figure}[!h]
\begin{center}
\includegraphics[trim = 1mm 00mm 00mm 0mm, clip,scale=.7]{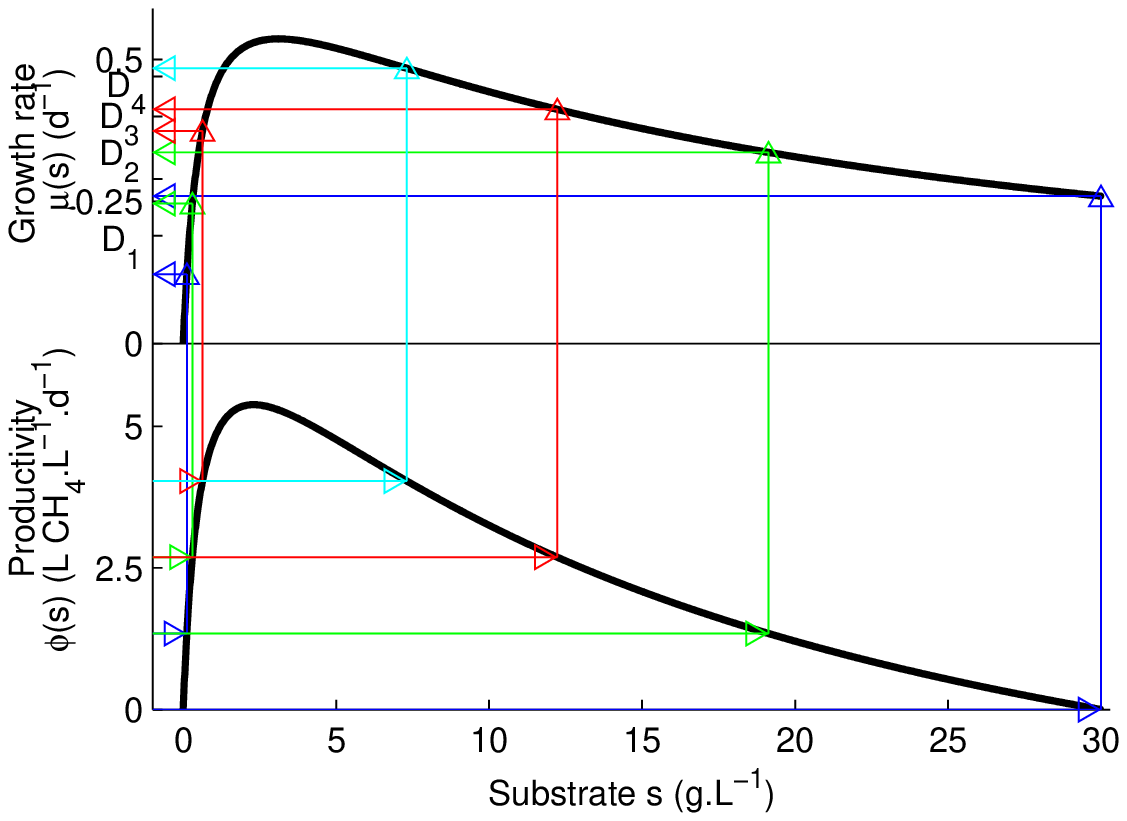} 
\includegraphics[trim = 1mm 00mm 00mm 0mm, clip,scale=.7]{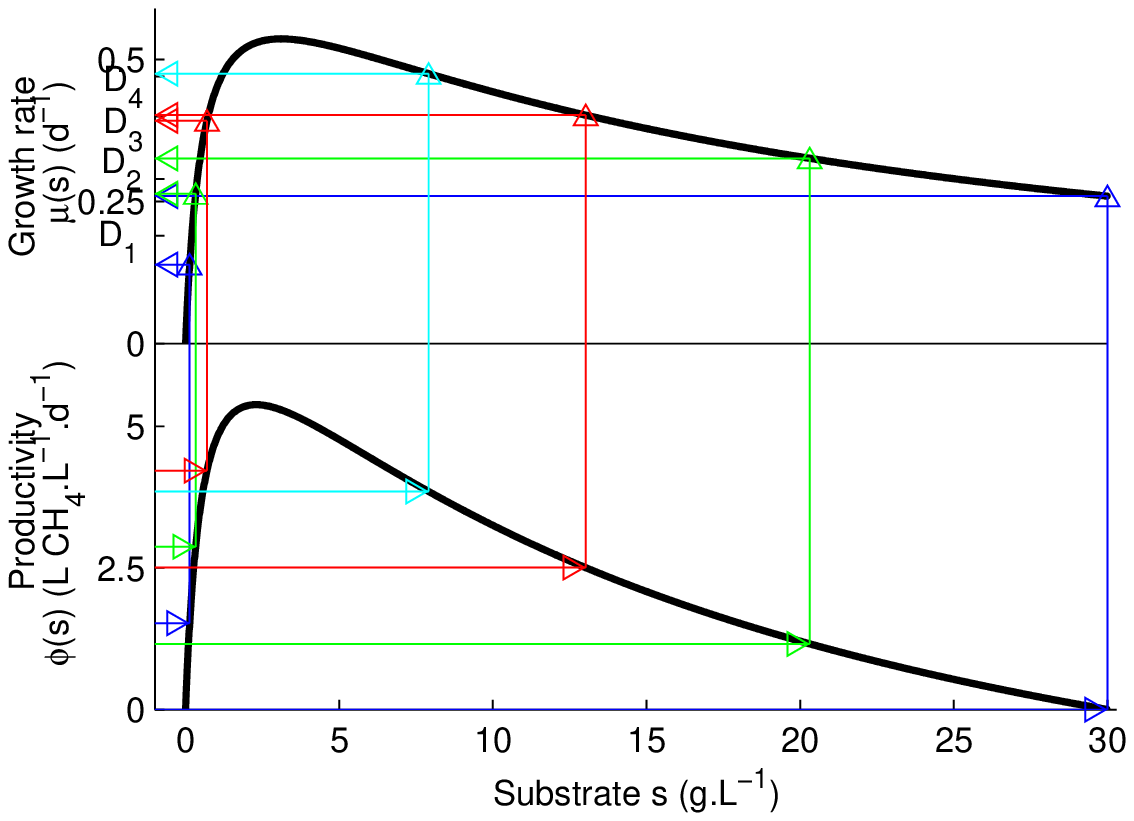} 
\caption{How to choose all the $D_i$ (see Section \ref{sec-choose}). Each color represents a region (1: blue, 2: green, 3: red, 4: cyan). Top: perfect measurements (A1); Bottom: uncertain measurements (A2).} 
\label{fig:tuning2}                                
\end{center}  
\end{figure}

\subsubsection*{How to choose $D_i$ in one region}
For given lower and upper bounds $\u y_i<\o y_i<\phi(s^\diamond)$, we define:
 $$\o D(\u y_i):=\min(\mu(s^\diamond),(\mu\circ  s_d)(\u y_i)) \quad \textrm{and} \quad \u D(\o y_i):=(\mu \circ s_c)(\o y_i).
 $$ 
 
 This can be done analytically or graphically, as shown on Figure \ref{fig:tuning1}A.
 Whenever we choose $D_i\in (\u D(\o y_i), \o D(\u y_i))$, we have
 \begin{itemize}
 \item $s^\diamond<s_d(\u y_i)<s_b(D_i)$, so $y_b(D_i)<\u y_i$ (Condition \eqref{eq-condi1}),
 \item $s_c(\o y_i)<s_a(D_i)<s^\diamond$, so $y_a(D_i)>\o y_i$ (Condition \eqref{eq-condi2}).
 \end{itemize}
so Conditions (\ref{eq-condi1}-\ref{eq-condi2}) for $i$ hold.

 If $\u D(\o y_i)\geq \o D(\u y_i)$ (see Figure \ref{fig:tuning1}B), it is not possible to fulfill the conditions and thus to implement the control law with this measurement range.\\

\subsubsection*{How to choose all the $D_i$}
The procedure proposed in the previous subsection should be repeated for all the regions. We can depict two particular cases:
\begin{itemize}
\item For the first region $Y_1$, given $\u y_1=0$, we actually impose $D_1<\mu(\si)$.
\item For the last region $Y_n$, one should check that $\u D(\o y_{n-1})<D_n=D^*<\o D(\u y_n)$. 
\end{itemize}
This approach is illustrated on Figure \ref{fig:tuning2}.\\

\subsubsection*{Increasing measurement resolution}
We have seen that for a given region, it is not always possible to fulfill Conditions (\ref{eq-condi1}-\ref{eq-condi3}). This gives rise to a question: is there any constraint on the measurement that guarantees the possibility to implement the control law?

 For perfect measurements (A1) with equidistribution, we will show that it will always be possible to implement the control law increasing the number of regions. \\
 
\com{First, we can arbitrarily define the lower bound of the last region: $\u y_n \in (y_b(D^*),y_a(D^*))$, recalling nonetheless that $n$ is unknown}. 
 
 Now, we will determine the limit range of a measurement region. Let  $\hat y$ such that $\u D(\hat y)=\mu(\si)$. For $\u y_i<\o y_i<\hat y$, we have $\u D(\o y_i)<\mu(\si)< \o D(\u y_i)$ so it is always possible to choose a $D_i$ in order to fulfill Conditions (\ref{eq-condi1}-\ref{eq-condi2}).

For $\o y \in [\hat y,\u y_n]$, we define (see Figure \ref{fig:tuning1}C):
 $$\psi(\o y):=(\phi \circ s_b \circ \mu \circ s_c)(\o y).$$
 It gives a lower bound on $\u y_i$: whenever one choose $\u y_i>\psi(\o y_i)$, then we have:
$$\o D(\u y_i)>\o D(\psi(\o y_i))=\u D(\o y_i),$$
  so one can choose any $D_i\in \left((\u D(\o y_i),\o D(\u y_i)\right)$ and Conditions (\ref{eq-condi1}-\ref{eq-condi2}) for $i$ will hold.

We can now define the mapping $\Delta:\o y \mapsto \o y- \psi(\o y)$ on $ [\hat y,\u y_n]$. $\Delta(\o y)$ defines the maximal range of the region with upper bound $\o y$ (in order to be able to find  a dilution rate such that Conditions (\ref{eq-condi1}-\ref{eq-condi2}) hold). $\Delta$ admits a minimum:

$$\Delta_m:=\min_{\o y \in [\hat y,\u y_n]} \Delta(\o y)>0.$$

Thus, for $n>\frac{\u y_n}{\Delta_m}+1$, 
the regions $Y_i$ defined by:
$$
\o y_i= \u y_{i+1}= \frac{i}{n-1} \u y_n, \quad i=1,...,n-1
$$
allow the implementation of the control law.
In conclusions, \com{whenever the measurement resolution is good enough (i.e. the number of regions is high enough),  it is always possible to find a set of dilution rates $D_i$ such that Conditions (\ref{eq-condi1}-\ref{eq-condi3})  hold in the case of perfect measurements (A1) with equidistribution. For uncertain measurements (A2), the same result holds, but the proof is omitted for sake of brevity.}

\begin{figure}[h]   
\centering
\includegraphics[trim = .9cm 0cm 1.3cm .1cm, clip,scale=0.48]{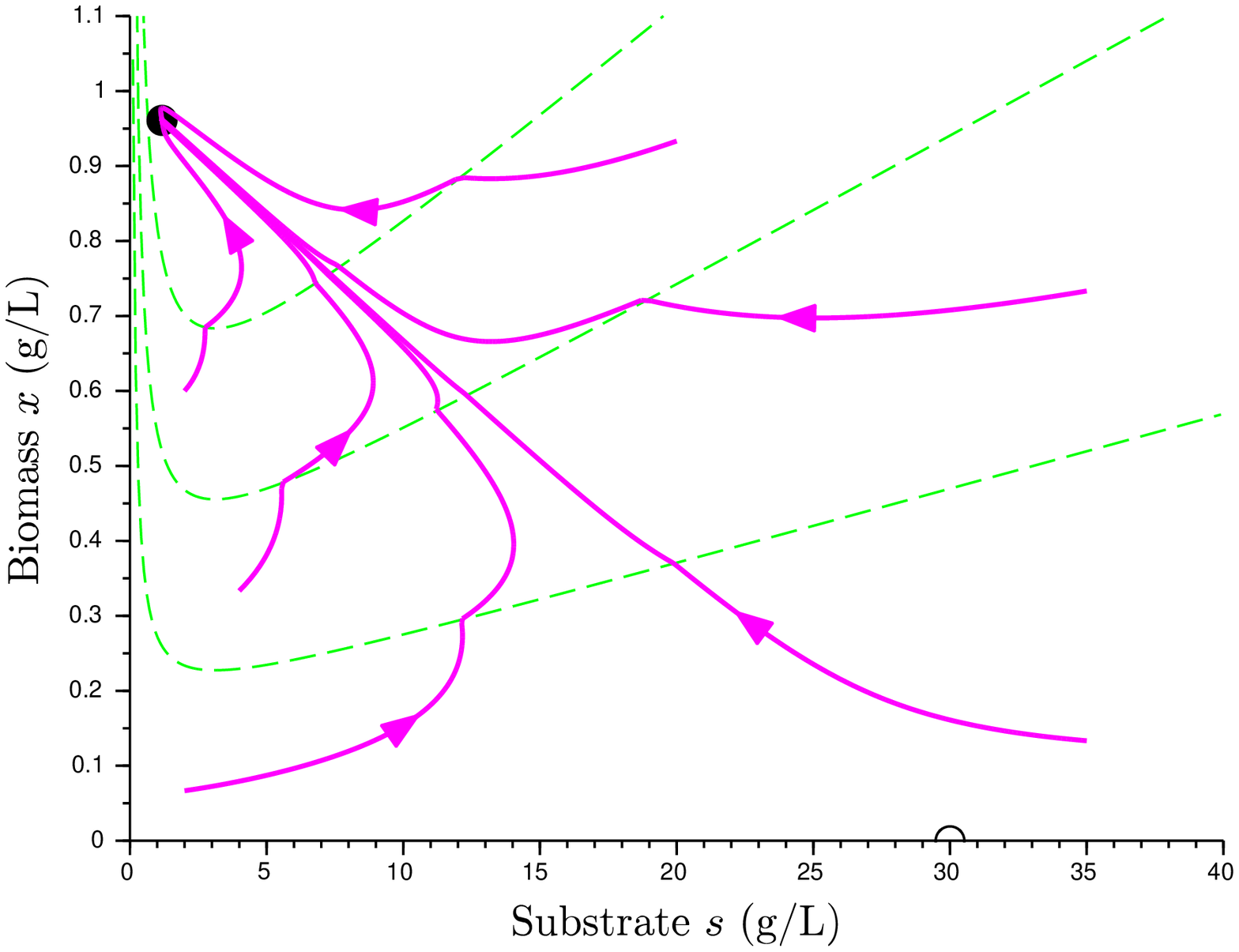} 
\includegraphics[trim = .9cm 0cm 1.3cm .1cm, clip,scale=0.48]{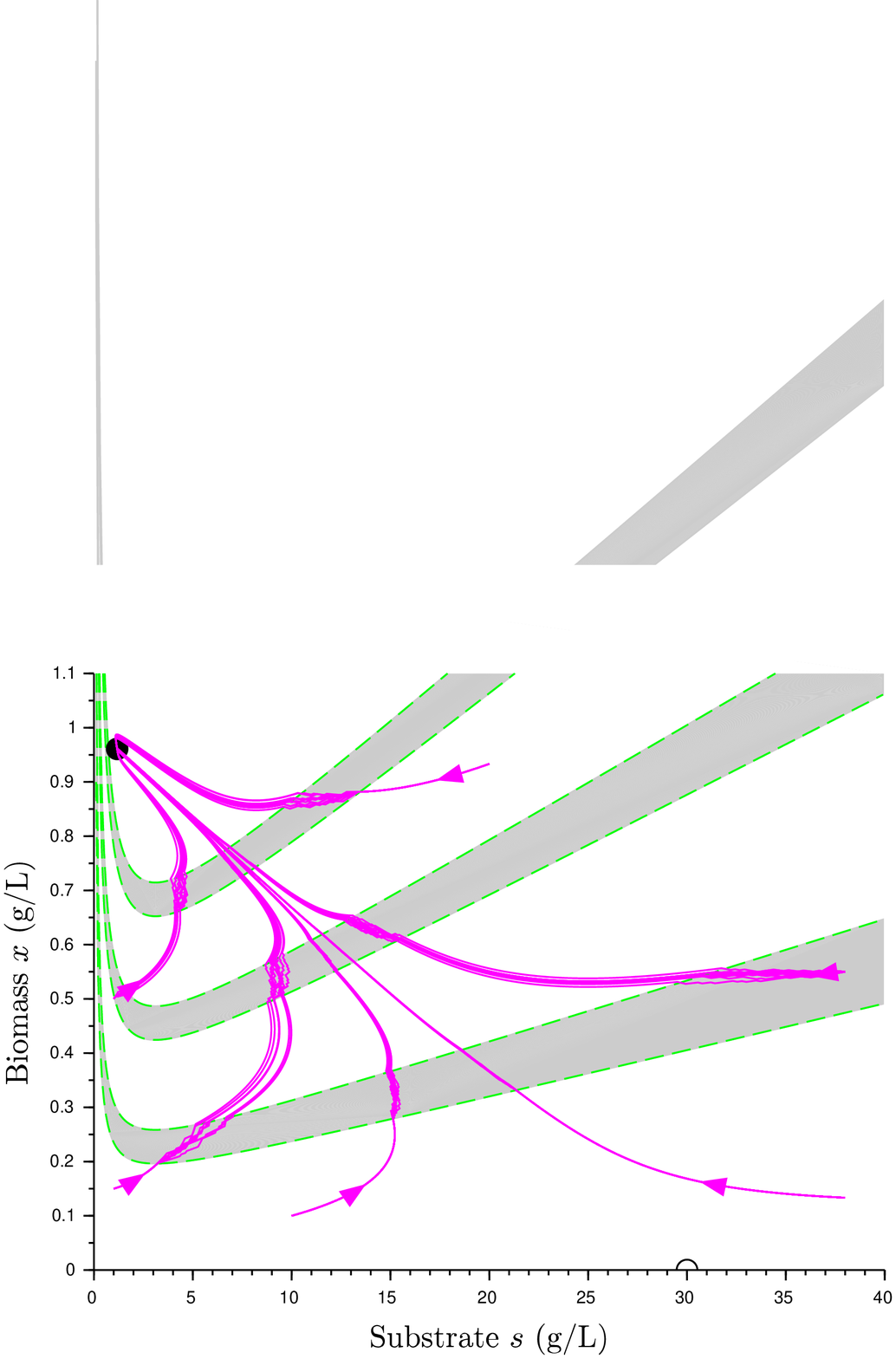}                           
\caption{Trajectories (magenta lines) with Control law \eqref{eq-u} for various initial conditions in the phase portrait. Conditions (\ref{eq-condi1}-\ref{eq-condi3}) are fulfilled, so all the trajectories converge towards the set-point (dark circle), see Proposition \ref{Propo}. Open circle: washout. Top: perfect measurements (A1); Bottom: uncertain measurements (A2). The frontiers are represented by the green dashed lines, switching regions are colored in gray.  In these gray regions, the system is not deterministic. } 
\label{fig:simu1} 
\end{figure}

\subsection{Simulations}

\begin{figure*}[ht]
\centering
\subfloat[Conditions (\ref{eq-condi1}-\ref{eq-condi3}) are fulfilled (see Fig. \ref{fig:simu1})]{\includegraphics[trim = -1cm 0cm -1cm .0cm, clip,scale=1]{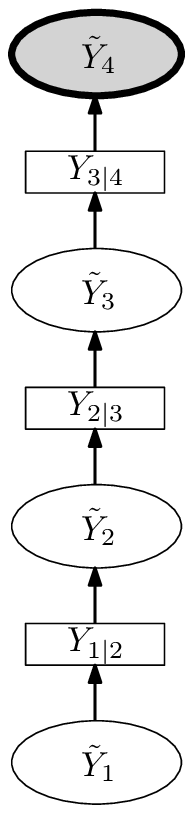} 
\label{fig_first_case}}
\hfil
\subfloat[Condition \eqref{eq-condi1}-2 does not hold (see Fig. \ref{figsimu2})]{\includegraphics[trim = -1cm 0cm -1cm .0cm, clip,scale=1]{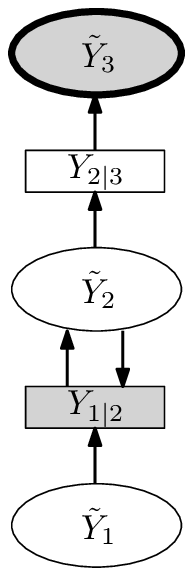} 
\label{fig_second_case}}
\hfil
\subfloat[Condition \eqref{eq-condi2}-2 does not hold (see Fig. \ref{figsimu3})]{\includegraphics[trim = -1cm 0cm -1cm .0cm, clip,scale=1]{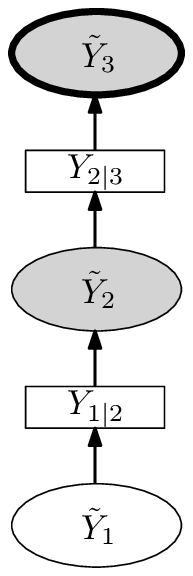} 
\label{fig_third_case}}
\caption{Transitions between regions. Ellipses: regular domains; rectangles: switching domains. White regions are transient, while grey regions have a stable equilibrium. Ellipses with a thick line are positively invariant.}
\label{fig_schema}
\end{figure*}

As an example, we consider the anaerobic digestion process, where the methane production rate is measured. Parameters, given in Table \ref{tab:param}, are inspired from \cite{AM2} (considering only the methanogenesis step). For uncertain measurements, we use discrete time simulation. At each time step $t_k$ (with $\Delta t=0.05$d), when $\xi(t_k)$ is in a switching region $Y_{i\mid i+1}$, we choose randomly the control $u(t_k)$ between $D_i$ and $D_{i+1}$. In this case, we perform various simulations for a same initial condition. 
 \begin{table}[h]
\begin{center}
\caption{Parameter values used for simulation.}\label{tab:param}
\begin{tabular}{lc}
Parameter & Value \\ \hline
 $\bar \mu  $ & 0.74 d$^{-1}$ \\
 $k_S$  & 0.59 g.L$^{-1}$\\ 
  $k_I$  & 16.4 g.L$^{-1}$\\
   $k$  & 30 \\
    $\alpha$  & 11 L CH$_4$.g$^{-1}$\\
     $\si$  & 30 g.L$^{-1}$\\
 \hline
 \end{tabular}
\end{center}
\end{table}

Our objective is to stabilize the equilibrium $\xi_a(D^*)$, with $D^*=0.47\ \textrm{d}^{-1}$  (which corresponds to a productivity of 92\% of the maximal productivity).
We first consider the following perfect measurement set (with equidistant region):
$$
\o y_i= \u y_{i+1}= \frac{i}{n-1}  \o y_n, \quad i=1,...,n-1, 
$$
with $\o y_n= 4 \ \textrm{L CH}_4.\textrm{L}^{-1}.\textrm{d}^{-1}$.
With four regions ($n=4$), we can define a set of dilution rates such that Conditions (\ref{eq-condi1}-\ref{eq-condi3}) are fulfilled (see Fig. \ref{fig:tuning2}):
$$
\begin{array}{ll}
D_1=0.19\ \textrm{d}^{-1}, &  D_2=0.29\ \textrm{d}^{-1}, \\
 D_3=0.4\ \textrm{d}^{-1}, & D_4=0.47\ \textrm{d}^{-1}.
\end{array}
$$

For uncertain measurements, we increased each upper bound and decreased each lower bound by  10\%. It appears that the same dilution rates can be chosen. 

Trajectories for various initial conditions are represented in the phase portrait for perfect and uncertain measurements, see Fig. \ref{fig:simu1}. In accordance with Proposition \ref{Propo}, all the trajectories converge towards the set-point. Thus, the transition graph is deterministic (there is only one transition from a region to the upper one), see Fig. \ref{fig_first_case}.

If the number of regions is reduced (three regions only), it is not possible in this example to choose dilution rates such that Conditions (\ref{eq-condi1}-\ref{eq-condi3}) hold for all $i$.  In this case, some trajectories do not converge towards the set-point. Some regions have transitions towards the upper region, but also towards the lower one. There are sliding modes. This aspect will be further discussed in the next subsection.

\begin{figure}[h]
\centering
\includegraphics[trim = .9cm 0cm 1.3cm .1cm, clip,scale=0.48]{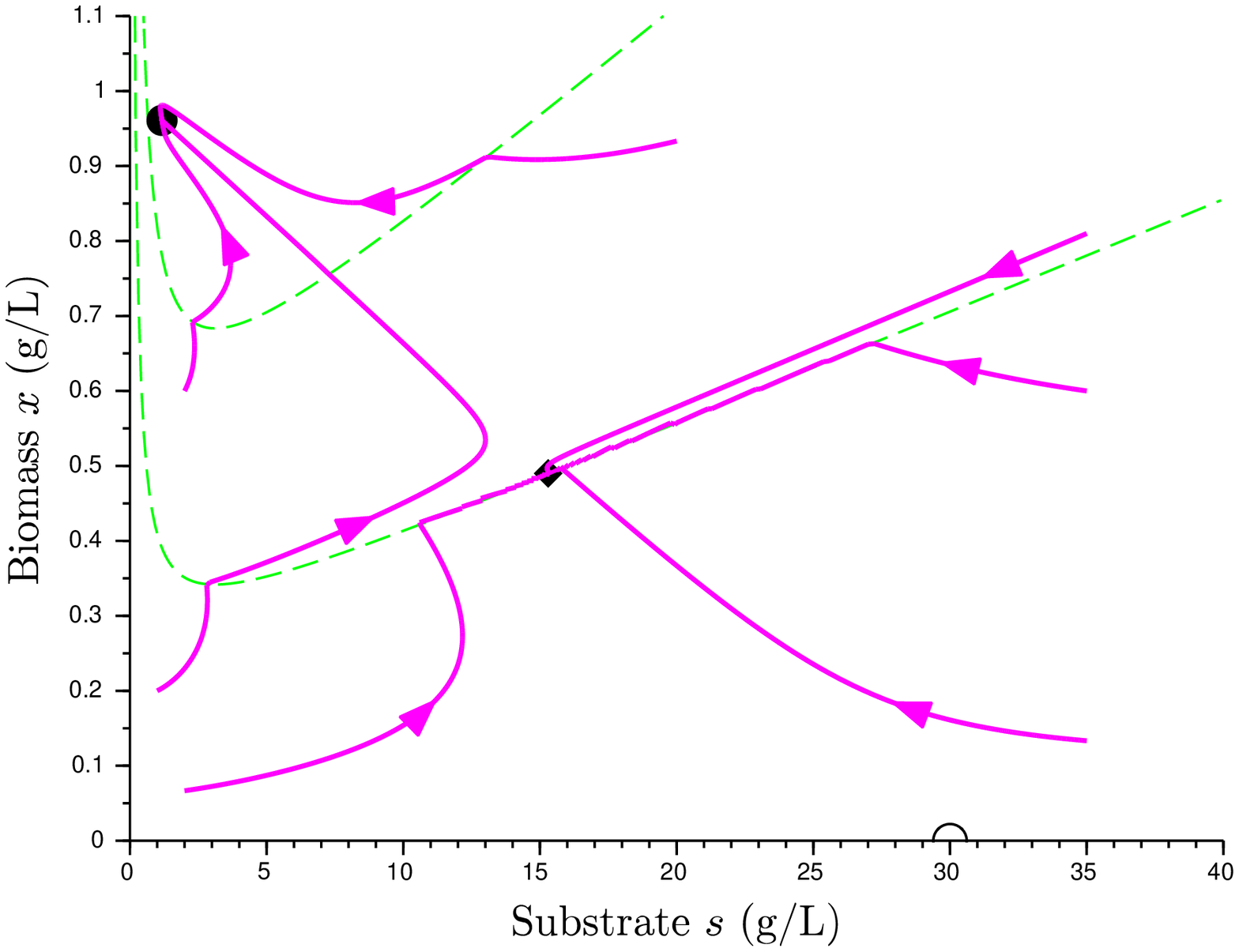} 
\includegraphics[trim = .9cm 0cm 1.3cm .1cm, clip,scale=0.48]{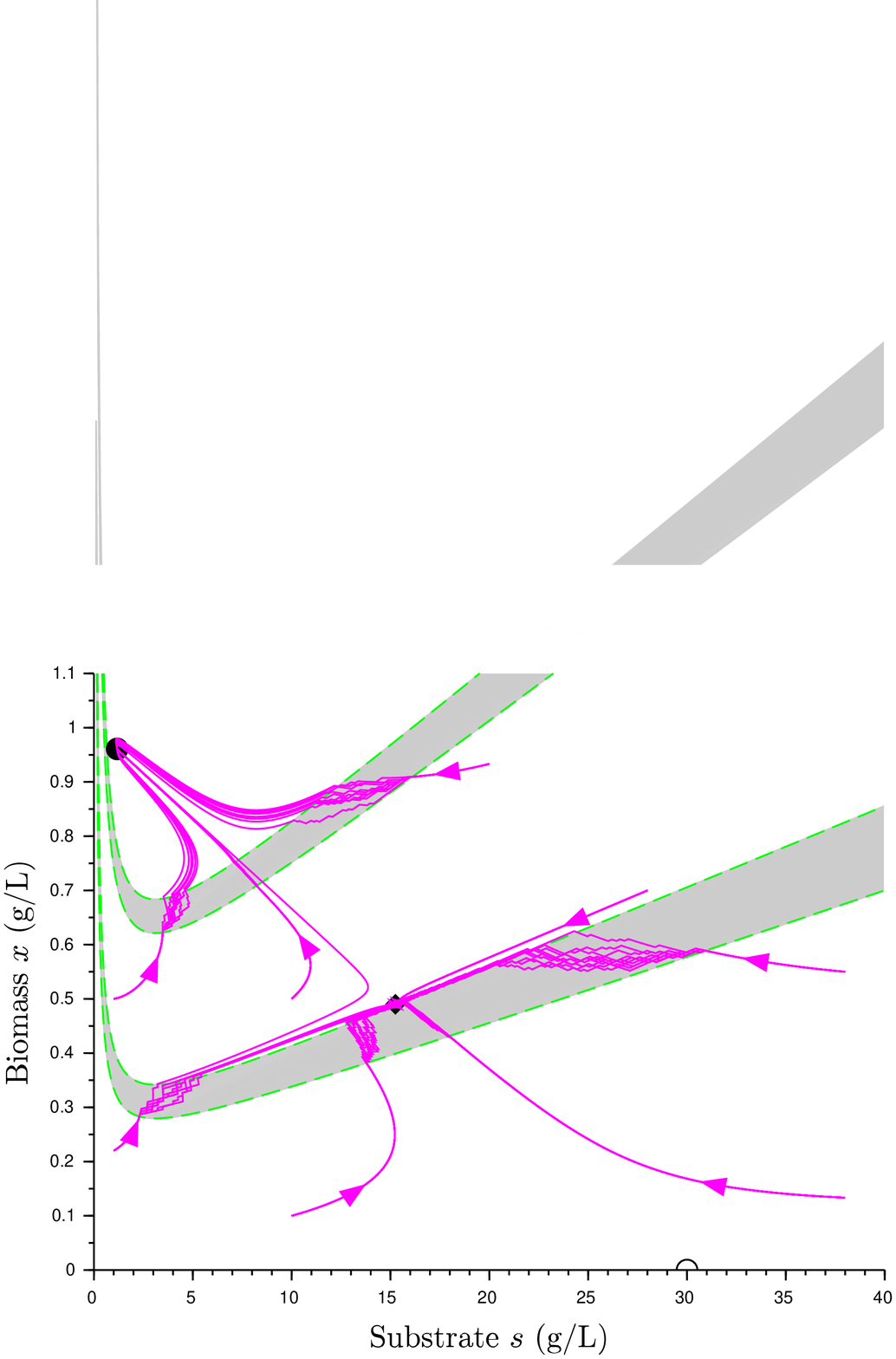} 
\caption{Trajectories with Control law \eqref{eq-u} when Conditions (\ref{eq-condi2})-2 are not fulfilled . Same legend as Fig. \ref{fig:simu1}. Some trajectories converge towards a singular equilibrium point (black diamond) with a sliding mode (see also the associated transition graph in Fig. \ref{fig_second_case}). } 
\label{figsimu2}      
\end{figure}

\subsection{When conditions are not verified: risk of failure}

We here detail   what happens if Conditions (\ref{eq-condi1}-\ref{eq-condi3}) are not fulfilled, and in particular if there is a risk of washout. This point is illustrated by Fig. \ref{figsimu2} and Fig. \ref{figsimu3}.\\

 First, given the previous analysis of the system, one can easily see that only the condition $ y_b(D_1)<\u y_1$, i.e. $D_1<\mu(\si)$ is necessary to prevent a washout, so $D_1$ can be chosen with a safety margin in order to avoid such situation.
Now, if Condition \eqref{eq-condi1} is not fulfilled for some $i>1$, the unstable equilibrium $\xi_b(D_i)$ will be located in the region $Y_i$. Thus, the region $\tilde Y_i$ have transitions towards the lower region, and a trajectory can stay in the switching domain $Y_{i-1|i}$. Given that \com{$z=s+kx$ converges towards $\si$ (cf. proof of Lemma \ref{lem-bound})}, such trajectory will converge towards the intersection between the switching domain and the invariant manifold $z=\si$ (see Fig. \ref{fig_second_case} and Fig. \ref{figsimu2}):
\begin{itemize}
\item For perfect measurements (A1), this gives rise to a sliding mode and the convergence towards a singular equilibrium point.
\item For uncertain measurements (A2), all the  trajectories converge towards a line segment. In our simulation, they actually also converge towards a singular equilibrium point.
\end{itemize}
\com{This situation can be detected by the incessant switches between two regions. In such case, the dilution rate $D_i$ should be slightly decreased.}\\

On the other hand, if Condition \eqref{eq-condi2} does not hold for some $i$, the stable equilibrium $\xi_a(D_i)$ will be located in the region $Y_i$, so some trajectories can converge towards this point instead of going to the next region, see Fig. \ref{fig_third_case} and Fig. \ref{figsimu3}. \com{To detect such situation, a mean escape time for each region can be estimated using model simulations. A trajectory which stay much more than the escape time in one region should have reached an undesirable equilibrium. In this case, the dilution rate $D_i$ should be slightly increased.} 

In all the cases, the trajectories converge towards a point or a line segment. Although it is not desired, this behavior is particularly safe (given that there is theoretically no risk of washout). Moreover, \com{as explained above}, these situations can easily be detected and the dilution rates can be changed accordingly (manually or through a supervision algorithm).

\begin{figure}[h]
\centering
\includegraphics[trim = .9cm 0cm 1.3cm .1cm, clip,scale=0.48]{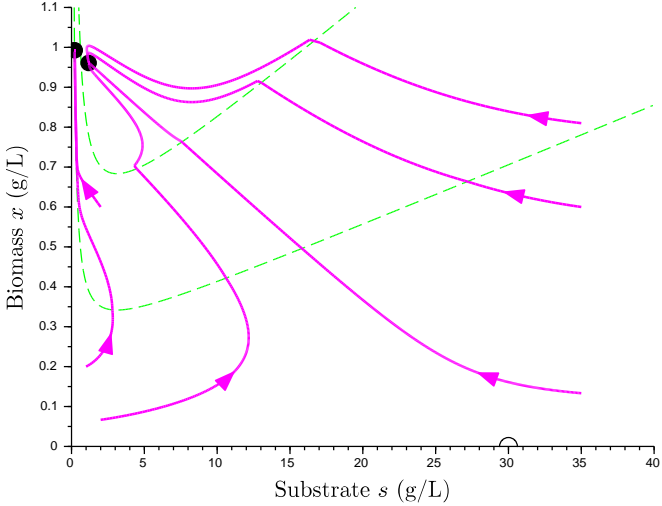} 
\caption{Trajectories with Control law \eqref{eq-u} and perfect measurement when Conditions (\ref{eq-condi1})-2 is not fulfilled. Same legend as Fig. \ref{fig:simu1}. Some trajectories (left of the figure) converge towards an equilibrium point in the region $\tilde Y_2$ (see also the associated transition graph in Fig. \ref{fig_third_case}). }   \label{figsimu3}          
\end{figure}

 \section{Conclusion}
 Given the quantized measurements, we were able to design (under some conditions) a control based on regions and transition between regions. These tools are similar to the ones of piecewise linear systems, and it is possible to draw a transition graph showing all the possible transitions. Moreover, we have seen that for some \com{undesirable} cases, singular behaviors (sliding modes) are possible on the boundaries between regions. We think that this kind of control on domains, and the design of the resulting transition graph, is a promising approach, that we want to deepen in future works. This approach could be generalized to other classical systems, e.g. in mathematical ecology.

 \bibliographystyle{splncs03}

\bibliography{hybridbioreactor}   

\section{Appendix}
\subsection*{Proof of Lemma \ref{lem-y}}
Let us define $\varphi_D^j(s):=g_D(s)-h_D^j(s)$ for $j=a,b$. We have:
$$
\varphi_D^j(s)=\frac{D(s_j(D)-s)}{k\mu(s)} +\frac{\mu(s)-D}{k\mu'(s)},$$ 
$$
{\varphi^j_D}'(s)=\frac{\mu(s)-D}{k}\left(\frac{1}{\mu(s)}-\frac{\mu''(s)}{\mu'(s)^2}\right) - (s_j(D)-s)\frac{D\mu'(s)}{k\mu(s)^2}.$$
First, we consider $\varphi_D^a(s)$ on $(0,\bar s)$. Given that $\mu(s)$ is increasing and concave on this interval, we get ${\varphi^a_D}'(s)<0$ on $(0,s_a(D))$, and ${\varphi^a_D}'(s)>0$ on $(s_a(D),\bar s)$. Moreover, we have ${\varphi^a_D}(s_a(D))=0$, so $\varphi_D^a(s)\geq0$ on $(0,\bar s)$, which proves (i).\\ 

 Now we want to determine the sign of $\varphi_D^b(s)$ on $(\bar s,+\infty)$.
For this purpose, we consider the equation $\varphi_D^b(s)=0$. By replacing $\mu(s)$ and its derivative by their analytic expressions, this equation becomes:
$$
\frac{s_a(D)}{k_I}s^2 -2k_S s +k_S s_b(D)=0.
$$
Given that $s_a(D)s_b(D)=k_S k_I$, the equation $\varphi_D^b(s)=0$ has only one root $s=s_b(D)$. Moreover, we have:
$$
\lim_{s\searrow\bar s}\varphi_D^b(s)=-\infty \quad \textrm{and} \quad \lim_{s\rightarrow+\infty}\varphi_D^b(s)=-\infty.
$$
Given that $\varphi_D^b(s)$ is continuous on $(\bar s,+\infty)$, we finally conclude that on this interval, $\varphi_D^b(s)\leq0$ , i.e. $g_D(s)\leq h^b_D(s)$. \qed

\subsection*{Proof of Lemma \ref{lem-doty}}

First, given that $g_D(s)$  represent the nullcline $\dot y(\xi)=0$, we can check that we have  $\dot y(\xi)>0$ on (see Figure \ref{fig:Dcst}):
\begin{align*}
\{\xi \in int\r \mid s<\bar s, x<g_D(s) \} \\ 
& \cup\{\xi \in int\r \mid s>\bar s, x>g_D(s) \}.
\end{align*}

Recalling that $h^a_D(s)$ and $h^b_D(s)$ are respectively the isolines $y(\xi)=y_a(D)$ and $y(\xi)=y_b(D)$, Lemma \ref{lem-y} allows to conclude that for $\xi\in \r$ such that $y_b(D) <y(\xi)<y_a(D)$, we have $\dot y(\xi)>0$. \qed

\end{document}